\DeclareMathAlphabet{\mathpzc}{OT1}{pzc}{m}{it}
\newtheorem{lemma}[subsection]{Lemma}
\newtheorem{proposition}[subsection]{Proposition}
\newtheorem{corollary}[subsection]{Corollary}
\theoremstyle{definition}
\newtheorem{remark}[subsection]{Remark}
\newtheorem{definition}[subsection]{Definition}
\numberwithin{equation}{section}
\theoremstyle{definition}
\def\de{\delta}
\def\lam{\lambda}
\DeclareMathOperator{\Ker}{\mathsf {Ker}}
\DeclareMathOperator{\Img}{\mathsf {Im}}
\DeclareMathOperator{\ab}{ab}
\def\lim{\operatorname{lim}}
\begin{document}
\title[Schur's theorem and its relation with the non-abelian tensor product]{Schur's theorem and its relation to the closure properties of the non-abelian tensor product}

\author[G.~Donadze]{Guram Donadze \orcidlink{0000-0003-3536-9599}}
\address{[G.~Donadze] Departamento de Matematica, Universidade de Brasilia, Brasilia-DF,
	70910-900 Brazil and Vladimir Chavchanidze Institute of Cybernetics of
	the Georgian Technical University, Sandro Euli str. 5, Tbilisi 0186,
	Georgia}
	\email{gdonad@gmail.com}
	
	\author[M. Ladra]{Manuel Ladra \orcidlink{0000-0002-0543-4508}}
	\address{[M. Ladra] Department of Matem\'aticas, University of Santiago de Compostela, 15782 Santiago de Compostela, Spain}
	\email{manuel.ladra@usc.es}
	
\author[P. P\'aez-Guill\'an]{Pilar P\'aez-Guill\'an \orcidlink{0000-0003-2761-7505}}
	\address{[P. P\'aez-Guill\'an] Department of Matem\'aticas, University of Santiago de Compostela, 15782 Santiago de Compostela, Spain}
	\email{pilar.paez@usc.es}

\begin{abstract} We show that the Schur multiplier of a Noetherian group need not be finitely generated.
We prove that the non-abelian tensor product of a polycyclic (resp. polycyclic-by-finite) group
and a Noetherian group, is a polycyclic (resp. polycyclic-by-finite) group. We also prove new versions of Schur's theorem.

\end{abstract}

\subjclass[2010] {18G10, 18G50.}

\keywords{non-abelian tensor product, Noetherian group, Schur multiplier.}

\maketitle

\section{Introduction}\label{Intr}

One of the most famous theorems of Schur states that the commutator subgroup of a central-by-finite group is finite. This theorem has been generalized
for various classes of groups; for instance, it has been shown that the commutator subgroup of a central-by-finite $p$-group (or a polycyclic-by-finite group)
is a finite $p$-group (or a polycyclic-by-finite group). In this manuscript we present an apparently new approach to prove Schur's theorem and
its generalizations. Our proof reveals a connection between this theorem and the closure properties of the non-abelian tensor product of groups.
One of the objectives of this paper is to study one of these properties for the class of Noetherian groups.
 Brown and  Loday introduced the non-abelian tensor product $G\otimes H$ for a pair of groups $G$ and $H$ in \cite{BroLod84} and \cite{BroLod87}
  in the context of an application in homotopy theory.
The achieved connection
helps to establish that the Schur multiplier of a Noetherian group is not always finitely generated, which seems to be a rather unexpected result.

If $G$ and $H$ belong to the class $\mathfrak{C}$, does $G\otimes H$ belong to the class $\mathfrak{C}$?
This question has been considered by many authors. Moravec \cite{Mo07} proves that the class of polycyclic groups  is closed under
the formation of the non-abelian tensor product.
 A group is polycyclic if and only if it is solvable and Noetherian. In \cite{DLT}  it is proved that if $G$ is a finite group and $H$ is Noetherian,
  then $G\otimes H$ is also Noetherian. Therefore, the following question is very natural and was posed in \cite{DLT}.

\

{\bf QUESTION.} Let $G$ and $H$ be Noetherian groups acting on each other compatibly. Is $G\otimes H$ a Noetherian group?

\

In the same paper it is shown that for a given Noetherian group $G$, the tensor square $G\otimes G$ is Noetherian if and only if
the Schur multiplier of $G$ is a finitely generated group. We base on Ol'shanski\u\i's result \cite{Olsh} to prove that, in general, the Schur multiplier of a
Noetherian group is not finitely generated, as it was already pointed out. We also generalize the main result of \cite{Mo07} and \cite[Theorem 5.11]{DLT},
proving that if $G$ is a polycyclic (resp. polycyclic-by-finite) group and $H$ is Noetherian, then $G\otimes H$ is a polycyclic (resp. polycyclic-by-finite) group.

The paper is organized as follows. In Section~\ref{S:noab} we provide
some basic results on the non-abelian tensor product and prove  that if $G$ is a polycyclic (resp. polycyclic-by-finite) group and $H$ is Noetherian, then $G\otimes H$ is a polycyclic (resp. polycyclic-by-finite) group.
In Section~\ref{S:schur} we give generalizations of Schur's theorem for various classes of
groups, especially for the super-solvable and the finitely
generated; we also show that the veracity of Schur's theorem for Noetherian groups is related to the ``finiteness'' of the Schur multiplier of the same group. Finally, we point out that the Schur multiplier of a Noetherian group need not be finitely generated.

\section{Non-abelian tensor product of Noetherian groups}\label{S:noab}
The non-abelian tensor product of groups was defined by Brown and Loday \cite{BroLod84, BroLod87} for a pair of groups
acting on each other. We write conjugation on the left, so $^gg'=gg'g^{-1}$ for $g,g'\in G$
and $^gg'g'^{-1}=[g,g']$ for the commutator of $g$ and $g'$.

\begin{definition}\label{D:2.1}
Let $G$ and $H$ be groups acting on themselves by conjugation, each of them acting on the other as well. The mutual actions are said to be \emph{compatible} if
\[
  ^{^h g}h'=\; ^{hgh^{-1}}h' \quad \text{ and }  \quad ^{^g h}g'=\ ^{ghg^{-1}}g', \ \text{for all} \ g,g'\in G \ \text{and} \  h,h'\in H.
\]
\end{definition}

\begin{definition}
If $G$ and $H$ are groups that act compatibly on each other, then the \emph{non-abelian tensor product} $G\otimes H$
is the group generated by the symbols $g\otimes h$ for $g\in G$ and $h\in H$ with relations
\begin{align*}
gg'\otimes h & =(^gg'\otimes \,^gh)(g\otimes h), \\
g\otimes hh' & =(g\otimes h)(^hg\otimes \,^hh'),
\end{align*}
for all $g,g'\in G$ and $h,h'\in H$.
\end{definition}

Given two groups $G$ and $H$ with mutual compatible actions, there is another action of $G$ on $G\otimes H$ defined by
$^g(g'\otimes h')=\:^gg'\otimes \:^gh'$ for each $g, g'\in G$ and $h'\in H$. Moreover, the map $\phi \colon G\otimes H
\to G$, given by $g\otimes h\mapsto g^hg^{-1}$ for each $g\in G$ and $h\in H$, is a crossed module (see  \cite[Proposition 2.3]{BroLod87}).

The special case in which $G=H$, and all actions are given by conjugation, is called the \emph{tensor square} $G\otimes G$.

Let $G$ and $H$ be groups acting on each other compatibly. In \cite{Vi}, the so-called \emph{derivative} of $G$ by $H$ was introduced; it was defined as $D_H(G) =  \langle g \, ^hg^{-1} \ \mid g \in G,  \: h\in H\rangle\subset G$.
Aiming to enhance the understanding of this paper, we recall a result  proved in \cite{DLT}.

\begin{proposition}\label{paper5}
Let $G$ and $H$ be groups acting on each other compatibly. If $G$ and $H$ are finitely generated, then $G\otimes H$ is also finitely generated if and only if so are
$D_H(G)$ and $D_G(H)$.
\end{proposition}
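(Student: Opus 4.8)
The statement is a biconditional, and the forward implication is immediate, so I would dispatch it first. Recall that $\phi\colon G\otimes H\to G$, $g\otimes h\mapsto g\,{}^hg^{-1}$, is a group homomorphism whose image is generated by the elements $g\,{}^hg^{-1}$, i.e.\ $\operatorname{Im}\phi=D_H(G)$; symmetrically there is a homomorphism $\phi'\colon G\otimes H\to H$, $g\otimes h\mapsto{}^ghh^{-1}$, with $\operatorname{Im}\phi'=D_G(H)$ (note $({}^ghh^{-1})^{-1}=h\,{}^gh^{-1}$, so the images coincide with the derivatives as defined). Since a homomorphic image of a finitely generated group is finitely generated, if $G\otimes H$ is finitely generated then so are $D_H(G)$ and $D_G(H)$; this direction needs nothing beyond the crossed-module structure of \cite{BroLod87}.

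For the converse I would assume $G$, $H$, $D_H(G)$ and $D_G(H)$ are all finitely generated and aim to exhibit an explicit finite generating set of $G\otimes H$. Fix $G=\langle x_1,\dots,x_m\rangle$, $H=\langle y_1,\dots,y_n\rangle$, $D_H(G)=\langle u_1,\dots,u_p\rangle$ and $D_G(H)=\langle v_1,\dots,v_q\rangle$, and choose preimages $\tilde u_k\in\phi^{-1}(u_k)$ and $\tilde v_l\in{\phi'}^{-1}(v_l)$. The candidate finite set is $\Sigma=\{x_i\otimes y_j\}\cup\{\tilde u_k\}\cup\{\tilde v_l\}$, and the whole problem is to prove $\langle\Sigma\rangle=G\otimes H$. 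The first ingredient is the generation lemma coming straight from the two defining relations: iterating $gg'\otimes h=({}^gg'\otimes{}^gh)(g\otimes h)$ and $g\otimes hh'=(g\otimes h)({}^hg\otimes{}^hh')$ shows that $G\otimes H$ is generated by the orbit of the finite set $\{x_i\otimes y_j\}$ under the two actions ${}^g(g'\otimes h')={}^gg'\otimes{}^gh'$ and its $H$-analogue.

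The second, decisive ingredient is that this a priori infinite orbit is controlled by the derivatives. Here I would use the Peiffer identity $t\,t'\,t^{-1}={}^{\phi(t)}t'$ (and its $\phi'$-counterpart): it says that the action of $D_H(G)=\operatorname{Im}\phi$ and of $D_G(H)=\operatorname{Im}\phi'$ on $G\otimes H$ is realized by inner conjugation, so by including the finitely many $\tilde u_k,\tilde v_l$ in $\Sigma$ the subgroup $\langle\Sigma\rangle$ is already closed under the $D_H(G)$- and $D_G(H)$-actions. It remains to absorb the \emph{outer} action by the generators $x_i,y_j$ themselves. For this the key computation is the rewriting
\[
{}^{x_i}(x_{i'}\otimes y_{j'})=(x_ix_{i'}\otimes y_{j'})(x_i\otimes y_{j'})^{-1},
\]
combined with the observation that ${}^{x_i}y_{j'}=v\,y_{j'}$ with $v=\phi'(x_i\otimes y_{j'})\in D_G(H)$ (and symmetrically in the first slot). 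These identities let me trade each outer action-step for a $D_G(H)$- or $D_H(G)$-action step, which is already inner, plus basic tensors; iterating and using the finiteness of the generating sets of $D_H(G)$ and $D_G(H)$ should collapse the orbit to $\langle\Sigma\rangle$.

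The main obstacle is precisely the termination of this last reduction. The two tensor slots twist each other, and the amount of twisting is measured exactly by the actions whose non-triviality is recorded by $D_H(G)$ and $D_G(H)$; one must show that the mutual twisting produces only finitely many independent generators. Equivalently, in the language of the combined map $\psi=(\phi,\phi')\colon G\otimes H\to G\times H$ one has a central extension $1\to\ker\psi\to G\otimes H\to\operatorname{Im}\psi\to1$, where $\operatorname{Im}\psi$ is only a \emph{subdirect} product of $D_H(G)$ and $D_G(H)$; since subdirect products of finitely generated groups need not be finitely generated, the argument cannot rest on finiteness of the two factors alone, but must genuinely exploit that the residual action factors through $G/D_H(G)$ and $H/D_G(H)$, on which the mutual actions are trivial. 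Making this control precise — so that both $\operatorname{Im}\psi$ and the central kernel $\ker\psi$ turn out finitely generated — is where the finite generation of the derivatives is indispensable, and it mirrors exactly why, absent that hypothesis, $G\otimes H$ may fail to be finitely generated.
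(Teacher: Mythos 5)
Your forward implication is fine, and two of your structural moves are exactly right: adjoining abstract preimages $\tilde u_k\in\phi^{-1}(u_k)$, $\tilde v_l\in{\phi'}^{-1}(v_l)$ so that the Peiffer identity $tt't^{-1}={}^{\phi(t)}t'$ makes $\langle\Sigma\rangle$ stable under the full $D_H(G)$- and $D_G(H)$-actions (hence normal in $G\otimes H$), and the absorption ${}^gh=vh$ with $v={}^gh\,h^{-1}\in D_G(H)$. But for the substantive direction you have a strategy, not a proof, and you concede this yourself: the termination of the rewriting, which is the entire content of the hard implication, is never established. The difficulty is real, because your trade of an outer action step for an inner one does not decrease any complexity measure. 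For instance, expanding ${}^{y_j}(x_i\otimes y_{j'})={}^{y_j}x_i\otimes y_jy_{j'}y_j^{-1}$, writing ${}^{y_j}x_i=ax_i$ with $a=({}^{y_j}x_i)x_i^{-1}\in D_H(G)$, and then applying $g\otimes hh'=(g\otimes h)\,{}^h(g\otimes h')$ to the second slot reproduces tensors twisted by the same outer element $y_j$: the loop never closes by itself. What is needed is a genuine double induction on word lengths, run simultaneously for the mixed tensors $a\otimes h$ and $g\otimes b$ with $a\in D_H(G)$, $b\in D_G(H)$ --- whose finitely many seed generators $u_k\otimes y_j$ and $x_i\otimes v_l$ are moreover absent from your $\Sigma$ --- together with twist-stripping identities of the kind appearing in the proof of Lemma~\ref{lemma1}, namely \eqref{ident2} and \eqref{ident3}. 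That induction is the actual content of the proof in \cite{DLT}; note the present paper does not prove this proposition at all but only recalls it from there.

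Worse, your closing ``equivalently'' is not an equivalence, and the reformulation it proposes is provably a dead end. Finite generation of $G\otimes H$ does not require $\ker\psi$ to be finitely generated (central subgroups of finitely generated groups need not be), and under the very hypotheses of the proposition $\ker\psi$ can fail to be finitely generated. Take $G=H=T$ a Tarski monster with conjugation actions: $T$ is $2$-generated, simple, hence perfect and Noetherian, and by the paper's final corollary (via Ol'shanski\u\i's construction \cite{Olsh}) its Schur multiplier $H_2(T)$ is not finitely generated. Here $\phi(g\otimes h)=g\,{}^hg^{-1}=[g,h]=\phi'(g\otimes h)$, so $\ker\psi=\ker\bigl(\lambda\colon T\otimes T\to[T,T]\bigr)$, which for a perfect group is the kernel of the universal central extension, i.e.\ $\ker\psi\cong H_2(T)$ (see \cite{BroLod87}). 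Since $D_T(T)=[T,T]=T$ is finitely generated, the proposition itself guarantees $T\otimes T$ is finitely generated while $\ker\psi$ is not; so no argument can deliver the finite generation of $\ker\psi$ that your last paragraph asks for, and the collapse of the orbit must instead be proved combinatorially as above.
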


The following useful result is also proved in \cite{DLT}.

\begin{proposition}\label{paper55} Let $G$ be a Noetherian group. Then $G \otimes G$ is Noetherian if and only if the Schur multiplier of $G$ is finitely generated.
\end{proposition}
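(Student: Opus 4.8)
The plan is to prove Proposition \ref{paper55} by connecting the finite generation of $G\otimes G$ to that of the Schur multiplier through the crossed module structure map $\phi\colon G\otimes G\to G$. Recall that for the tensor square, $\phi$ sends $g\otimes g'\mapsto [g,g']$, so its image is exactly the commutator subgroup $[G,G]$. The kernel of $\phi$ is the key object: by the fundamental relationship between the non-abelian tensor square and the Schur multiplier established by Brown and Loday, there is a central extension relating $\ker\phi$ to $H_2(G)$, the Schur multiplier. More precisely, one has a natural surjection $\ker\phi \twoheadrightarrow H_2(G)$ whose kernel is a quotient of the exterior square, and in the relevant setting $\ker\phi$ sits in a short exact sequence with $H_2(G)$ as a direct factor or central quotient.

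First I would set up the short exact sequence
\[
1 \lra \Ker\phi \lra G\otimes G \overset{\phi}{\lra} [G,G] \lra 1.
\]
Since $G$ is Noetherian, its subgroup $[G,G]$ is finitely generated (indeed Noetherian). The strategy hinges on the standard fact that in an extension with finitely generated quotient, the whole group is finitely generated if and only if the kernel is finitely generated \emph{as a normal subgroup}; but here $\Ker\phi$ is central in $G\otimes G$ (it lies in the center because $\phi$ is a crossed module and the action of $G$ on $\Ker\phi$ is trivial), so being finitely generated as a normal subgroup coincides with being finitely generated as an abelian group. Thus $G\otimes G$ is finitely generated — equivalently Noetherian, since a finitely generated abelian-by-(finitely generated) situation collapses nicely — precisely when $\Ker\phi$ is finitely generated.

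Next I would relate $\Ker\phi$ to the Schur multiplier. The plan is to invoke the Brown--Loday exact sequence, which gives a central extension
\[
1 \lra H_2(G) \lra G\otimes G \overset{\phi}{\lra} [G,G] \lra 1
\]
when $G$ acts on itself by conjugation and one passes to the appropriate quotient, or more carefully the identification $\Ker\phi \cong H_2(G)$ via the commutator pairing. Granting this, finite generation of $\Ker\phi$ is literally finite generation of the Schur multiplier $H_2(G)$. Combining the two equivalences — Noetherianity of $G\otimes G$ iff $\Ker\phi$ finitely generated, and $\Ker\phi\cong H_2(G)$ — yields the proposition.

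The main obstacle I anticipate is the careful identification of $\Ker\phi$ with the Schur multiplier and the verification that $\Ker\phi$ is central (hence that finite generation as a normal subgroup reduces to ordinary finite generation). The subtlety is that $G\otimes G$ need not be abelian, and the precise Brown--Loday sequence involves the exterior square $G\wedge G$ rather than the tensor square directly; one must track the extra factor $\Gamma(G^{\ab})$-type contribution and confirm it does not obstruct finite generation. I expect this to be handled by the explicit exact sequences of Brown--Loday together with the fact that for a Noetherian group $G^{\ab}$ is finitely generated, so that the ``discrepancy'' between $\otimes$ and $\wedge$ is automatically finitely generated and can be quotiented out without affecting the equivalence.
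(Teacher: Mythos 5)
A preliminary remark: the paper itself does not prove Proposition~\ref{paper55}; it quotes it from \cite{DLT}. Your overall plan --- the central extension $1\to \Ker\phi \to G\otimes G \xrightarrow{\phi} [G,G]\to 1$ combined with the Brown--Loday/Miller identification of the Schur multiplier inside the square --- is the standard route and essentially the argument of the cited source. However, two steps as written are wrong and need repair.

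First, the ``standard fact'' you invoke fails in the only-if direction (a finitely generated group can have a central subgroup that is not finitely generated), and with it your claim that $G\otimes G$ is finitely generated precisely when $\Ker\phi$ is. In fact, for Noetherian $G$ the tensor square is \emph{always} finitely generated: $D_G(G)=[G,G]$ is finitely generated, so Proposition~\ref{paper5} applies. Yet $G\otimes G$ need not be Noetherian --- the gap between ``finitely generated'' and ``Noetherian'' is exactly what the paper's main counterexample (via Ol'shanski\u\i) exploits, so your phrase ``finitely generated --- equivalently Noetherian'' is precisely the conflation that the proposition forbids. The correct equivalence is the Noetherian one, and it is easier than what you attempted: if $\Ker\phi$ is finitely generated then, being central and abelian, it is Noetherian, and since $[G,G]\leq G$ is Noetherian, the extension $G\otimes G$ is Noetherian; conversely, any subgroup of a Noetherian group, in particular $\Ker\phi$, is finitely generated. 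No appeal to normal generation is needed.

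Second, $\Ker\phi\cong H_2(G)$ is false in general. What is true (Miller, Brown--Loday) is $H_2(G)\cong\ker\bigl(G\wedge G\to[G,G]\bigr)$, where $G\wedge G=(G\otimes G)/\nabla(G)$ with $\nabla(G)=\langle g\otimes g \mid g\in G\rangle$, giving the short exact sequence $1\to\nabla(G)\to\Ker\phi\to H_2(G)\to 1$, in which $\nabla(G)$ is a homomorphic image of Whitehead's quadratic functor $\Gamma(G_{\ab})$. Your closing paragraph already contains the right fix: for Noetherian $G$ the abelianization $G_{\ab}$ is finitely generated, hence so are $\Gamma(G_{\ab})$ and its image $\nabla(G)$; therefore $\Ker\phi$ is finitely generated if and only if $H_2(G)$ is (extension of finitely generated abelian by finitely generated abelian in one direction, passage to a quotient in the other). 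With these two corrections your outline does yield the proposition, by the same mechanism as the proof in \cite{DLT}.
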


In Section~\ref{S:schur} we will show that the Schur multiplier of Noetherian groups is not always finitely generated. The next lemma
helps to establish some closure properties of the non-abelian tensor product of Noetherian groups.

\begin{lemma} \label{lemma1} Let $G$ be a group and $H$ be a normal subgroup of $G$. Suppose that $G$ and $H$ act on each other by conjugation.
Let $i \colon H\otimes H \to G\otimes H$ be the homomorphism given by $h\otimes h' \mapsto h\otimes h'$ for each $h, h' \in H$.
Then $i(H\otimes H)$ is a normal subgroup of $G\otimes H$ and $\frac{G\otimes H}{i(H\otimes H)}$ is abelian.
\end{lemma}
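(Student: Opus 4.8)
The plan is to exploit the crossed module $\phi\colon G\otimes H\to G$, $g\otimes h\mapsto g\,{}^{h}g^{-1}$, recalled just before the statement. The first thing I would record is that the normality of $H$ forces $\phi$ to land in $H$: indeed $\phi(g\otimes h)=g\,{}^{h}g^{-1}=ghg^{-1}h^{-1}=[g,h]\in H$, and, more generally, ${}^{a}h\in H$ whenever $a\in G$ and $h\in H$. This is essentially the only place where the hypothesis $H\trianglelefteq G$ enters, but it is used repeatedly.

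For the normality of $i(H\otimes H)$, I would invoke the Peiffer identity belonging to the crossed module structure of $\phi$, namely $t\,t'\,t^{-1}={}^{\phi(t)}t'$ for all $t,t'\in G\otimes H$, together with the formula ${}^{a}(g'\otimes h')={}^{a}g'\otimes{}^{a}h'$ for the $G$-action. Applying this to a generator $h\otimes h'$ of $i(H\otimes H)$ and an arbitrary $t\in G\otimes H$ gives $t\,(h\otimes h')\,t^{-1}={}^{\phi(t)}h\otimes{}^{\phi(t)}h'$; since $\phi(t)\in G$ and $H\trianglelefteq G$, both ${}^{\phi(t)}h$ and ${}^{\phi(t)}h'$ lie in $H$, so the conjugate again lies in $i(H\otimes H)$. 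As $i(H\otimes H)$ is generated by the elements $h\otimes h'$ and is closed under products and inverses, this suffices to conclude that it is normal.

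The heart of the argument, and the step I expect to be the main obstacle, is to show that the $G$-action becomes trivial on the quotient once restricted to $H$; precisely, that ${}^{a}(g'\otimes h')\equiv g'\otimes h'\pmod{i(H\otimes H)}$ for every $a\in H$. I would prove this by applying the first defining relation of the tensor product twice. On one hand, expanding $ag'\otimes h'$ directly yields ${}^{a}g'\otimes{}^{a}h'=(ag'\otimes h')(a\otimes h')^{-1}$, where $a\otimes h'\in i(H\otimes H)$. On the other hand, writing $ag'=g'b$ with $b=g'^{-1}ag'\in H$ (here normality is used again) and expanding $g'b\otimes h'$ gives $ag'\otimes h'=(a\otimes{}^{g'}h')(g'\otimes h')$, with $a\otimes{}^{g'}h'\in i(H\otimes H)$. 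Combining the two identities expresses ${}^{a}(g'\otimes h')$ as $g'\otimes h'$ multiplied on the left and right by elements of $i(H\otimes H)$, which gives the claimed congruence.

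Finally, to see that the quotient is abelian it is enough to check that the images of the generators commute. For two simple tensors the Peiffer identity gives $[g\otimes h,\,g'\otimes h']={}^{\phi(g\otimes h)}(g'\otimes h')\,(g'\otimes h')^{-1}$, and since $\phi(g\otimes h)=[g,h]\in H$ I may apply the previous step with $a=[g,h]$. Together with the normality established above (used to absorb a conjugate of an element of $i(H\otimes H)$), this shows $[g\otimes h,\,g'\otimes h']\in i(H\otimes H)$. Hence any two generators of $\frac{G\otimes H}{i(H\otimes H)}$ commute, and the quotient is abelian.
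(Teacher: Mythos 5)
Your proof is correct, and its skeleton matches the paper's: normality comes from the conjugation formula $t(g\otimes h)t^{-1}={}^{\phi(t)}g\otimes{}^{\phi(t)}h$ (the paper quotes this for generators, as identity (2.1) from Brown--Johnson--Robertson, which is exactly the Peiffer identity you invoke), and abelianness reduces in both cases to showing that the diagonal action of an element of $H$ on $G\otimes H$ is trivial modulo $i(H\otimes H)$, applied with $a=[g,h]\in H$. Where you genuinely diverge is in how that key congruence is established. The paper proves it coordinate-wise: it derives $(g\otimes{}^{h'}h)\,i(H\otimes H)=(g\otimes h)\,i(H\otimes H)$ and $({}^{h'}g\otimes h)\,i(H\otimes H)=(g\otimes h)\,i(H\otimes H)$ separately, leaning on the auxiliary identity $g\otimes[h,h']={}^{g}(h\otimes h')(h\otimes h')^{-1}$ imported from \cite{BJR87}. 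You instead get the diagonal statement ${}^{a}(g'\otimes h')\equiv g'\otimes h'$ in one stroke, by expanding $ag'\otimes h'$ twice via the first defining relation --- once directly, giving ${}^{a}g'\otimes{}^{a}h'=(ag'\otimes h')(a\otimes h')^{-1}$, and once after rewriting $ag'=g'b$ with $b=g'^{-1}ag'\in H$, giving $ag'\otimes h'=(a\otimes{}^{g'}h')(g'\otimes h')$ --- so that the conjugated tensor is sandwiched between elements of $i(H\otimes H)$. This buys you a more self-contained argument: you need only the two defining relations of the tensor product, the crossed-module property of $\phi$ already recalled in the paper, and normality of $H$, with no appeal to the extra commutator identity; the paper's route, in exchange, records the two one-sided congruences (2.2) and (2.3), which are slightly finer pieces of information. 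Your bookkeeping is also sound at the two places where care is needed: normality of $i(H\otimes H)$ is established \emph{before} you pass to cosets, and in the final step you correctly use that normality to absorb $(g'\otimes h')v(g'\otimes h')^{-1}$ back into $i(H\otimes H)$.
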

\begin{proof}
For each $ g, g' \in G$ and $h,  h'\in H$, the following relation verifies in $G\otimes H$ (see \cite{BJR87}):
\begin{align} \label{ident1}
(g '\otimes h') (g\otimes h)(g'\otimes h')^{-1} = \: ^{[g', h']}g \otimes \: ^{[g', h']}h.
\end{align}
This implies that $i(H\otimes H)$ is a normal subgroup of $G\otimes H$. To show the abelian character of
 $\frac{G\otimes H}{i(H\otimes H)}$, we will use another identity proved in \cite{BJR87}:
\[
g\otimes [h, h'] =\; ^{g} (h\otimes h') (h\otimes h') ^{-1},
\]
for each $h, h'\in H$ and $g\in G$. It holds that
\[
g\otimes \, ^{h'} h = g\otimes  h [h^{-1}, h'] = ( g\otimes  h )^h( g\otimes  [h^{-1}, h']);
\]
combining these two identities, we get:
\begin{align} \label{ident2}
(g\otimes \,^{h'} h) \:  i(H\otimes H) =(g\otimes  h) \: i(H\otimes H) ,
\end{align}
for each $g\in G$ and $h, h'\in H$. Moreover,
\[
^{h'}g\otimes  h = g[g^{-1}, h']\otimes  h = \: ^{h'}([g^{-1}, h']\otimes  h)(g\otimes  h),
\]
hence
\begin{align} \label{ident3}
(^{h'}g\otimes  h) \:  i(H\otimes H) =(g\otimes  h) \: i(H\otimes H) ,
\end{align}
for each $g\in G$ and $h, h'\in H$. Finally, using \eqref{ident1}, \eqref{ident2} and \eqref{ident3}, we obtain
\[
(g'\otimes h')(g\otimes h)(g'\otimes h')^{-1} \: i(H\otimes H) =(g\otimes h)  \: i(H\otimes H) ,
\]
for each $g, g'\in G$ and $h, h'\in H$, and therefore $\frac{G\otimes H}{i(H\otimes H)}$ is an abelian group.
\end{proof}

\begin{corollary} \label{corollary1}  Let $G$ be a Noetherian group and $H$ be a normal subgroup of $G$. Suppose that
$G$ and $H$ act on each other by conjugation. If the Schur multiplier of $H$ is a finitely generated group, then $G\otimes H$
is Noetherian.
\end{corollary}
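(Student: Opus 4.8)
The plan is to exhibit $G\otimes H$ as a group extension whose kernel and quotient are both Noetherian, and then to invoke the closure of the class of Noetherian groups under extensions (if $N\trianglelefteq K$ with $N$ and $K/N$ Noetherian, then $K$ is Noetherian, as one sees from the ascending chain condition by intersecting a chain with $N$ and projecting it to $K/N$). The extension I would use is exactly the one furnished by Lemma~\ref{lemma1}: since $H$ is normal in $G$ with the conjugation actions, we have $i(H\otimes H)\trianglelefteq G\otimes H$ with abelian quotient $Q:=\frac{G\otimes H}{i(H\otimes H)}$. It therefore suffices to prove that $i(H\otimes H)$ is Noetherian and that $Q$ is finitely generated.

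For the kernel, first note that $H$, being a subgroup of the Noetherian group $G$, is itself finitely generated and Noetherian. Since the Schur multiplier of $H$ is finitely generated, Proposition~\ref{paper55}, applied to the tensor square of $H$, shows that $H\otimes H$ is Noetherian. As $i(H\otimes H)$ is the homomorphic image of $H\otimes H$ under $i$, it is a quotient of a Noetherian group and hence Noetherian.

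For the quotient, the key point is to show that the whole group $G\otimes H$ is finitely generated, after which $Q$, being a homomorphic image of $G\otimes H$ that is abelian, is automatically finitely generated and therefore Noetherian. To obtain finite generation of $G\otimes H$ I would apply Proposition~\ref{paper5}: $G$ and $H$ are finitely generated, so it remains to check that the derivatives $D_H(G)$ and $D_G(H)$ are finitely generated. Here I would compute, using that the actions are by conjugation, that $g\,{}^h g^{-1}=[g,h]$ and $h\,{}^g h^{-1}=[h,g]$, whence $D_H(G)=D_G(H)=[G,H]$. Because $H$ is normal we have $[G,H]\leq H\leq G$, and as a subgroup of the Noetherian group $G$ it is finitely generated; Proposition~\ref{paper5} then yields that $G\otimes H$ is finitely generated.

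I expect the only genuinely subtle point to be the treatment of the quotient $Q$. A direct assault via generators and relations is misleading: identities \eqref{ident2} and \eqref{ident3}, together with the tensor relations, make $Q$ into a module over $\mathbb{Z}[G/H]$ that is finitely generated as a module but not obviously finitely generated as an abelian group when $G/H$ is infinite. The clean way around this obstacle is precisely the observation above, namely to certify finite generation of $Q$ not from its internal presentation but from the finite generation of the ambient group $G\otimes H$, which is what Proposition~\ref{paper5} delivers once the derivatives are identified with $[G,H]$. Combining the two halves through the closure of Noetherian groups under extensions then finishes the argument.
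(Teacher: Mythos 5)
Your proof is correct and takes essentially the same route as the paper's: Proposition~\ref{paper55} (applied to $H$, which is Noetherian as a subgroup of $G$) makes $i(H\otimes H)$ Noetherian, Lemma~\ref{lemma1} plus Proposition~\ref{paper5} make the quotient a finitely generated abelian group, and closure of Noetherian groups under extensions concludes. Your explicit identification $D_H(G)=D_G(H)=[G,H]\leq H$, which justifies the appeal to Proposition~\ref{paper5}, is exactly the detail the paper leaves implicit.
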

\begin{proof}  By Proposition~\ref{paper55}, $H\otimes H$ is a Noetherian group. Moreover, Proposition~\ref{paper5}
and Lemma~\ref{lemma1} imply that $\frac{G\otimes H}{i(H\otimes H)}$ is a finitely generated abelian group,
hence $\frac{G\otimes H}{i(H\otimes H)}$ is Noetherian. Thus, from the exact sequence
\[
H\otimes H \to G\otimes H \to \frac{G\otimes H}{i(H\otimes H)} \to 1
\]
we deduce that $G\otimes H$ is Noetherian.
\end{proof}

\begin{lemma} \label{lemma2} Suppose that $G$ and $H$ are normal subgroups of some group which act on each other by conjugation.
If both $G$ and $H$ are Noetherian, and the Schur multiplier of $G\cap H$ is finitely generated, then $G\otimes H$
is also a Noetherian group.
\end{lemma}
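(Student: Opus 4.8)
The plan is to exhibit $G\otimes H$ as an extension of a finitely generated abelian group by a Noetherian normal subgroup. Write $K$ for the ambient group containing $G$ and $H$ as normal subgroups, and set $L=G\cap H$. Since $G,H\trianglelefteq K$ we have $L\trianglelefteq K$, and in particular $L\trianglelefteq G$ and $L\trianglelefteq H$; moreover $L$ is Noetherian, being a subgroup of the Noetherian group $G$, and $[G,H]\subseteq L$ because every commutator $[g,h]$ lies in both $G$ and $H$. The hypothesis that the Schur multiplier of $L$ is finitely generated will be fed into Corollary~\ref{corollary1}.

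First I would produce the Noetherian ``building blocks''. Applying Corollary~\ref{corollary1} to the pair $(G,L)$ and to the pair $(H,L)$ (and using the natural isomorphism $L\otimes H\cong H\otimes L$) shows that $G\otimes L$ and $L\otimes H$ are Noetherian. Let $A$ and $B$ denote the images in $G\otimes H$ of $G\otimes L$ and $L\otimes H$ under the homomorphisms induced by the inclusions $L\hookrightarrow H$ and $L\hookrightarrow G$. Using \eqref{ident1} together with the facts that $[g',h']\in L$ and that $L\trianglelefteq K$, conjugation by a generator $g'\otimes h'$ sends a generator $g\otimes l$ of $A$ to ${}^{[g',h']}g\otimes{}^{[g',h']}l\in A$, and similarly for $B$; hence $A$ and $B$ are normal in $G\otimes H$. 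Therefore $M:=AB$ is a normal subgroup, and it is Noetherian as a product of two Noetherian normal subgroups.

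Next I would analyse $Q:=(G\otimes H)/M$. The two defining relations of the tensor product, combined with ${}^gl,{}^hl\in L$, give $(gl)\otimes h\equiv g\otimes h$ and $g\otimes(hl)\equiv g\otimes h \pmod M$ for every $l\in L$; thus the class of $g\otimes h$ in $Q$ depends only on the cosets $gL$ and $hL$. Because $[g',h']\in L$ acts trivially on these cosets, \eqref{ident1} then shows that conjugation is trivial in $Q$, so $Q$ is abelian. Finally, writing $\langle\bar g,\bar h\rangle$ for the class of $g\otimes h$, the first defining relation descends to $\langle\bar g\bar g',\bar h\rangle=\langle\bar g\bar g'\bar g^{-1},\bar h\rangle\,\langle\bar g,\bar h\rangle$; substituting $\bar g'=\bar g^{-1}\bar x\bar g$ turns this into $\langle\bar x\bar g,\bar h\rangle=\langle\bar x,\bar h\rangle\,\langle\bar g,\bar h\rangle$, i.e.\ $\bar g\mapsto\langle\bar g,\bar h\rangle$ is a homomorphism $G/L\to Q$; the symmetric argument handles the second variable. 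Hence $\langle-,-\rangle$ is bi-multiplicative, and since $G/L$ and $H/L$ are finitely generated (being quotients of Noetherian groups), $Q$ is generated by the finitely many $\langle\bar g_i,\bar h_j\rangle$ and is therefore a finitely generated abelian group.

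With these two ingredients the proof finishes quickly: from the exact sequence $1\to M\to G\otimes H\to Q\to 1$ with $M$ and $Q$ Noetherian, we conclude that $G\otimes H$ is Noetherian, since the class of Noetherian groups is closed under extensions. I expect the main obstacle to be the last step of the previous paragraph --- proving that $Q$ is not merely abelian but finitely generated --- where the non-obvious point is the bi-multiplicativity of $\langle-,-\rangle$ modulo $M$; the conjugation substitution $\bar g'=\bar g^{-1}\bar x\bar g$ is the device that makes the internal conjugation in the relation collapse, and everything else is routine once $L$, $A$ and $B$ are in place.
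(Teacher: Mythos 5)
Your proof is correct and follows essentially the same route as the paper: your subgroup $M=AB$ is precisely the image $X$ of $(G\cap H)\otimes H \times G\otimes (G\cap H)$ in the paper's exact sequence, Corollary~\ref{corollary1} is applied identically to the pairs $(G,\,G\cap H)$ and $(H,\,G\cap H)$, and the conclusion is drawn from the same extension of a finitely generated abelian quotient by a Noetherian kernel. The only difference is one of packaging: the paper simply cites the exact sequence and the identification of the trivial-action tensor product with $\bigl(\frac{G}{G\cap H}\bigr)_{\ab}\otimes\bigl(\frac{H}{G\cap H}\bigr)_{\ab}$, whereas you rederive these facts by hand (normality of $M$ via \eqref{ident1}, and the bi-multiplicativity of $\langle -,-\rangle$ modulo $M$), which is a valid, self-contained reproof of the same ingredients.
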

\begin{proof} Given two groups $G/G\cap H$ and $H/G\cap H$, we define the mutual actions induced by conjugation, which are trivial
because $[G, H]\subset G\cap H$. We obtain the following exact sequence:
\[
(G\cap H)\otimes H \:\times \: G \otimes (G\cap H) \to G\otimes H \to \frac{G}{G\cap H}\otimes  \frac{H}{G\cap H}\to 1.
\]
Since $G/G\cap H$ and $H/G\cap H$ act on each other trivially,
\[
\frac{G}{G\cap H}\otimes  \frac{H}{G\cap H} = \Bigg(\frac{G}{G\cap H}\Bigg)_{\ab}\otimes
\Bigg(\frac{H}{G\cap H}\Bigg)_{\ab},
\]
where $G_{\ab}$ denotes the abelianization of $G$.

Therefore, $\frac{G}{G\cap H}\otimes  \frac{H}{G\cap H}$ is a finitely generated abelian group. Moreover, by Corollary~\ref{corollary1}
both $(G\cap H)\otimes H$ and  $G \otimes (G\cap H)$ are Noetherian groups. The above exact sequence completes
the proof.
\end{proof}

Given two groups $G$ and $H$ with mutual compatible actions, let $(G, H)$ denote the normal subgroup of the semidirect product
$G \rtimes H$ generated by the elements $(g^hg^{-1}, h^gh^{-1})$ for all $g\in G$ and $h\in H$. Set
$G\circ H = \frac{G \rtimes H}{(G, H)}$. There are actions of $G\circ H$ on $G$ and on $H$ given by $^{(g, h)}g' =
\:^{g}(^hg')$ and  $^{(g, h)}h' =\:^{g}(^hh')$ for all $g, g' \in G$ and $h, h' \in H$, and the natural homomorphisms
$\mu \colon G \to G\circ H$ and $\nu \colon H \to G\circ H$ together with these actions are crossed modules (see \cite{Ell87f}).
Hence, $\Ker \mu$ and $\Ker \nu$ are abelian groups acting trivially on $H$ and on $G$, respectively. We have the following

\begin{lemma} \label{lemma3} If $G$ and $H$ are Noetherian groups and the Schur multiplier of $\mu (G) \cap \nu (H)$ is finitely generated, then $G\otimes H$ is Noetherian.
\end{lemma}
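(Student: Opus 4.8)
The plan is to reduce the statement to Lemma~\ref{lemma2}, applied to the normal subgroups $\mu(G)$ and $\nu(H)$ of $G\circ H$, and then to measure the difference between $G\otimes H$ and $\mu(G)\otimes\nu(H)$ by two auxiliary tensor products which, despite appearances, turn out to be finitely generated abelian.

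First I would record the relevant structure. As $\mu$ and $\nu$ are crossed modules, the images $\mu(G)$ and $\nu(H)$ are normal subgroups of $G\circ H$ acting on each other by conjugation, and $\mu(G)\cong G/\Ker\mu$, $\nu(H)\cong H/\Ker\nu$ are Noetherian, being quotients of Noetherian groups. The conjugation action of $\nu(H)$ on $\mu(G)$ inside $G\circ H$ agrees, through $\mu$ and $\nu$, with the action of $H/\Ker\nu$ on $G/\Ker\mu$ induced from the original one; this is exactly where one uses that $\Ker\mu$ acts trivially on $H$ and $\Ker\nu$ acts trivially on $G$, so that the induced actions are well defined. Then Lemma~\ref{lemma2}, taking $G\circ H$ as the ambient group and invoking the hypothesis that the Schur multiplier of $\mu(G)\cap\nu(H)$ is finitely generated, yields that $\mu(G)\otimes\nu(H)$ is Noetherian.

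Next I would relate $G\otimes H$ to this quotient. Writing $K=\Ker\mu$ and $L=\Ker\nu$, both are normal and compatible ($K$ is $H$-invariant and $L$ is $G$-invariant because $\mu,\nu$ are equivariant), so the standard exact sequence for the tensor product under quotients --- the same one already used in the proof of Lemma~\ref{lemma2} --- furnishes
\[
(K\otimes H)\times(G\otimes L)\ \lra\ G\otimes H\ \lra\ \mu(G)\otimes\nu(H)\ \lra\ 1.
\]
Since the right-hand term is Noetherian, it remains only to prove that $K\otimes H$ and $G\otimes L$ are Noetherian. I claim that $K\otimes H$ (the term $G\otimes L$ being symmetric) is finitely generated abelian. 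For commutativity, consider the crossed module $\phi\colon K\otimes H\to K$: because $K$ is abelian and acts trivially on $H$, the induced action of $K$ on $K\otimes H$, given by $\,^{z}(k'\otimes h')={}^{z}k'\otimes{}^{z}h'=k'\otimes h'$, is trivial, so the Peiffer identity $\,^{\phi(x)}y=xyx^{-1}=y$ forces $K\otimes H$ to be abelian. For finite generation, note $K$ is a subgroup of the Noetherian group $G$, hence finitely generated, and $H$ is Noetherian, hence finitely generated; moreover $D_K(H)=1$ (as $K$ acts trivially on $H$) and $D_H(K)\subseteq K$ is finitely generated, so Proposition~\ref{paper5} gives that $K\otimes H$ is finitely generated. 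A finitely generated abelian group is Noetherian, so $K\otimes H$, and likewise $G\otimes L$, is Noetherian; feeding this into the exact sequence completes the argument.

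I expect the main obstacle to be precisely the treatment of the auxiliary terms $K\otimes H$ and $G\otimes L$: a priori these are genuine non-abelian tensor products with a possibly non-trivial action of $H$ on $K$, so finite generation from Proposition~\ref{paper5} alone does not suffice, and one must additionally exploit the crossed module map $\phi$ together with the triviality of the action of $\Ker\mu$ on $H$ to see that they collapse to finitely generated abelian groups. A secondary point requiring care is checking that the conjugation actions inside $G\circ H$ coincide with the induced quotient actions, so that the group $\mu(G)\otimes\nu(H)$ produced by Lemma~\ref{lemma2} is the very one appearing as the cokernel above.
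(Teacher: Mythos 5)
Your proposal is correct and follows the same skeleton as the paper's proof: both reduce $\mu(G)\otimes\nu(H)$ to Lemma~\ref{lemma2} (noting $\mu(G)$, $\nu(H)$ are Noetherian normal subgroups of $G\circ H$ and invoking the hypothesis on the Schur multiplier of $\mu(G)\cap\nu(H)$), and both then conclude via the exact sequence $G\otimes\Ker\nu\times\Ker\mu\otimes H\to G\otimes H\to\mu(G)\otimes\nu(H)\to 1$. The one genuine divergence is your treatment of the auxiliary terms. The paper disposes of $\Ker\mu\otimes H$ by citing Guin's identification $\Ker\mu\otimes H=\Ker\mu\otimes_H I(H)$, which is finitely generated abelian because $\Ker\mu$ is finitely generated (a subgroup of the Noetherian group $G$) and $I(H)$ is a finitely generated $H$-module. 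You instead argue intrinsically: abelianness via the Peiffer identity for the crossed module $\Ker\mu\otimes H\to\Ker\mu$, whose underlying action is trivial since $\Ker\mu$ is abelian and acts trivially on $H$; and finite generation via Proposition~\ref{paper5}, using that $D_{\Ker\mu}(H)=1$ and that $D_H(\Ker\mu)\subseteq\Ker\mu$ is finitely generated as a subgroup of a Noetherian group. Your route is more self-contained --- it avoids the citation to \cite{Gu88} and the module-theoretic computation, at the cost of having to verify that the restricted actions of $\Ker\mu$ and $H$ remain compatible so that the crossed module exists; both arguments are sound, and your explicit check that the conjugation actions inside $G\circ H$ agree with the induced quotient actions is a point the paper leaves implicit.
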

\begin{proof} Since $\Ker \mu$ is an abelian group  acting trivially on $H$, we infer from \cite{Gu88} that
 $\Ker \mu\otimes H = \Ker \mu\otimes_H I(H)$. The latter is a finitely generated abelian group, since $\Ker\mu$ is too and, additionally, $I(H)$ is a finitely generated $H$-module.  For the same reason, $G\otimes \Ker\nu$ is a finitely generated
abelian group. Moreover, by Lemma~\ref{lemma2} $\mu (G)\otimes \nu(H)$ is Noetherian. Thus, the exact sequence
\[
G\otimes \Ker\nu \: \times \:\Ker \mu \otimes H \to G\otimes H \to \mu (G)\otimes \nu(H)\to 1
\]
implies that $G\otimes H$ is a Noetherian group too.
\end{proof}

\begin{proposition}  If  $G$ is a polycyclic (resp. polycyclic-by-finite) group and $H$ is Noetherian, then $G\otimes H$ is a
polycyclic (resp. polycyclic-by-finite) group.
\end{proposition}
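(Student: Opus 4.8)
The plan is to obtain the result in two stages: first show that $G\otimes H$ is Noetherian by feeding the right subgroup into Lemma~\ref{lemma3}, and then upgrade Noetherianness to the full structural conclusion by exploiting the crossed module $\phi\colon G\otimes H\to G$.

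First I would verify that Lemma~\ref{lemma3} is applicable. Since $G$ is polycyclic (resp. polycyclic-by-finite) it is in particular Noetherian, and $H$ is Noetherian by hypothesis, so both groups satisfy the standing assumption of the lemma. It then remains to check that the Schur multiplier of $\mu(G)\cap\nu(H)$ is finitely generated. Since $\mu\colon G\to G\circ H$ is a homomorphism, $\mu(G)\cong G/\Ker\mu$ is a quotient of $G$, hence again polycyclic (resp. polycyclic-by-finite); consequently its subgroup $\mu(G)\cap\nu(H)$ is polycyclic (resp. polycyclic-by-finite) as well, because both classes are closed under passing to subgroups. A polycyclic-by-finite group is finitely presented — indeed of type $FP_\infty$ — so its integral second homology, i.e. its Schur multiplier, is finitely generated. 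Thus the hypothesis of Lemma~\ref{lemma3} is met and $G\otimes H$ is Noetherian.

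Next I would use the crossed module $\phi\colon G\otimes H\to G$, $g\otimes h\mapsto g\,{}^hg^{-1}$, recalled in the preliminaries. Its image is exactly $D_H(G)\subseteq G$, which, being a subgroup of $G$, is polycyclic (resp. polycyclic-by-finite). Its kernel, as the kernel of a crossed module, lies in the centre of $G\otimes H$; in particular $\Ker\phi$ is abelian, and since $G\otimes H$ is Noetherian it is finitely generated, hence polycyclic. Therefore the short exact sequence
\[
1\to \Ker\phi \to G\otimes H \to D_H(G)\to 1
\]
exhibits $G\otimes H$ as an extension of a polycyclic (resp. polycyclic-by-finite) group by a polycyclic group. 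As both classes are closed under extensions, $G\otimes H$ is polycyclic (resp. polycyclic-by-finite), as required.

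The routine ingredients — closure of the two classes under subgroups, quotients and extensions, and the centrality of a crossed-module kernel — are standard, and the translation into the structure of $G\otimes H$ is immediate once Noetherianness is in hand. The one external input carrying the real weight is the finite generation of the Schur multiplier of a polycyclic-by-finite group; I expect this to be the essential point, as it is the step that genuinely pushes us beyond the self-contained machinery of the previous lemmas. Everything else amounts to choosing $\mu(G)\cap\nu(H)$ as the input to Lemma~\ref{lemma3} and reading off $G\otimes H$ from the central extension above.
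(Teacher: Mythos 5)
Your proof is correct and follows essentially the same route as the paper: Noetherianness of $G\otimes H$ via Lemma~\ref{lemma3} applied to $\mu(G)\cap\nu(H)$ (a subgroup of a quotient of $G$, hence polycyclic-by-finite with finitely generated Schur multiplier), followed by the central extension coming from the crossed module $\phi\colon G\otimes H\to G$ with polycyclic-by-finite image and finitely generated central kernel. Your additions---justifying the finite generation of the multiplier via finite presentability and identifying $\Img\phi$ explicitly as $D_H(G)$---are accurate refinements of steps the paper leaves implicit.
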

\begin{proof}  Being a homomorphic image of a polycyclic (resp. polycyclic-by-finite) group, the group $\mu (G)$ inherits its structure; consequently, $\mu (G)\cap \nu (H)$ is also a polycyclic (resp. polycyclic-by-finite) group, due to its condition of subgroup of $\mu (G)$.
Therefore, the Schur multiplier of $\mu (G)\cap \nu (H)$
is finitely generated, and Lemma~\ref{lemma3} ensures that $G\otimes H$ is Noetherian. Now, consider the homomorphism
$\phi \colon G\otimes H \to G$ defined in the Introduction. Since $\phi$ is a crossed module,
$\Ker \phi$ is contained  in the center of $G\otimes H$; furthermore, it is a finitely generated abelian group, because $G\otimes H$ is Noetherian. Moreover, $\Img \phi$ is a polycyclic (resp. polycyclic-by-finite) group, and consequently
$G\otimes H$ is also polycyclic (resp. polycyclic-by-finite).
\end{proof}

\begin{corollary}
Let $G$ be a simple Noetherian group and $H$ be Noetherian. If the Schur multiplier of $G$ is finitely generated,
then $G\otimes H$ is Noetherian.
\end{corollary}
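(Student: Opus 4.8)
The plan is to reduce the whole statement to Lemma~\ref{lemma3}. Since $G$ and $H$ are already assumed Noetherian, the only remaining hypothesis of that lemma is that the Schur multiplier of $\mu(G)\cap\nu(H)$ be finitely generated. So the entire task is to determine the isomorphism type of the intersection $\mu(G)\cap\nu(H)$ inside $G\circ H$, and simplicity of $G$ will make this essentially forced.

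First I would observe that $\mu(G)$ is a homomorphic image of $G$, i.e.\ $\mu(G)\cong G/\Ker\mu$. Because $G$ is simple, $\Ker\mu$ is either trivial or all of $G$, so $\mu(G)$ is either trivial or isomorphic to the simple group $G$. Next I would invoke the fact, already recorded in the excerpt, that $\mu$ and $\nu$ are crossed modules: a standard crossed-module axiom forces the image of a crossed module to be normal in its codomain, so both $\mu(G)$ and $\nu(H)$ are normal subgroups of $G\circ H$. Consequently their intersection $\mu(G)\cap\nu(H)$ is normal in $G\circ H$, and in particular it is a normal subgroup of $\mu(G)$.

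Combining the two observations finishes the argument. If $\mu(G)=1$, then trivially $\mu(G)\cap\nu(H)=1$. If instead $\mu(G)\cong G$ is simple, then the normal subgroup $\mu(G)\cap\nu(H)$ of $\mu(G)$ must be either trivial or equal to $\mu(G)\cong G$. Hence in every case $\mu(G)\cap\nu(H)$ is either trivial or isomorphic to $G$. Since the Schur multiplier is an isomorphism invariant, vanishes for the trivial group, and is finitely generated for $G$ by hypothesis, the Schur multiplier of $\mu(G)\cap\nu(H)$ is finitely generated. Lemma~\ref{lemma3} then yields that $G\otimes H$ is Noetherian.

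There is very little computation to grind through here; the one point requiring genuine care is the claim that $\mu(G)\cap\nu(H)$ is normal in $\mu(G)$, which rests on the normality of crossed-module images inside $G\circ H$ — this is precisely the step that lets simplicity of $G$ bite, since it restricts $\mu(G)\cap\nu(H)$ to the two extremes. I would also make a quick sanity check of the abelian simple case $G\cong\mathbb{Z}/p$, but the dichotomy above already covers it, because any quotient of $G$ remains either trivial or isomorphic to $G$.
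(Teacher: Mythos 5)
Your proposal is correct and matches the paper's own proof essentially step for step: both use the crossed-module structure of $\mu$ and $\nu$ to get normality of $\mu(G)$, $\nu(H)$, and their intersection, then use simplicity of $G$ twice to force $\mu(G)\cap\nu(H)$ to be trivial or isomorphic to $G$, and conclude via Lemma~\ref{lemma3}. Your extra remarks (the sanity check for $G\cong\mathbb{Z}/p$ and the explicit appeal to normality of crossed-module images) are just slightly more detailed versions of what the paper leaves implicit.
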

\begin{proof} Since $\mu \colon G \to G\circ H$ and $\nu \colon H \to G\circ H$ are crossed modules, $\mu (G)$ and $\nu (H)$
are normal subgroups of $G\circ H$, as well as $\mu (G)\cap \nu (H)$.
On the one hand, $G$ is a simple group, so $\mu (G)$ is either trivial, or it is isomorphic to $G$. On the other hand, $\mu (G)\cap \nu (H)$ is a normal
subgroup of $\mu (G)$; using the same argument as before, it is clear that $\mu (G)\cap \nu (H)$ is either a trivial group, or it is isomorphic to $G$.
Hence, the Schur multiplier of $\mu (G)\cap \nu (H)$ is either trivial, or finitely generated.
\end{proof}

\begin{remark}
Note that if $G$ is a finite group and $H$ is Noetherian, then the non-abelian tensor product $G\otimes H$ is not
always finite.  For instance, assume that $G=\mathbb{Z}/2\mathbb{Z}=\langle t \mid t+t=0\rangle$ and
$H=\mathbb{Z}$. Defining an action of $G$ on $\mathbb{Z}$ by $^tn=-n$ for each
$n\in \mathbb{Z}$, and assuming that $\mathbb{Z}$ acts trivially on $G$, we obtain that $\mathbb{Z}$ and $G$
act on each other compatibly, but $G\otimes\mathbb{Z}$ is isomorphic to $\mathbb{Z}$.
\end{remark}

\begin{remark}
We would like to mention here that the finiteness problem of the Schur multiplier of Noetherian groups can be studied from a topological approach. In particular,
given a group $G$, let $\pi_2^S(K(G, 1))$ denote the second stable homotopy group of the Eilenberg-MacLane space
$K(G, 1)$. The following exact sequence of groups is provided by \cite[Theorem 4.7]{ADST}:
\[
1 \to \frac{G_{\ab}}{2G_{\ab}}\to \pi_2^S(K(G, 1)) \to H_2(G)\to 1,
\]
where $2G_{\ab}$ is a subgroup of $G_{\ab}$ generated by all elements
of the form $2x$, $x\in G_{\ab}$. If, additionally, $G$ is Noetherian, it is deduced that the Schur multiplier $H_2(G)$ is finitely generated
if and only if  $\pi_2^S(K(G, 1))$ is finitely generated.
\end{remark}

\section{Connection with Schur's theorem}\label{S:schur}

Given a group $G$, let $Z(G)$ denote its center. A famous theorem of Schur asserts that if $[G : Z(G)]$ is finite,
then $[G, G]$ is also finite (\!\cite{RoDe, RoJo}). We refer the reader to the recent, very nice survey article \cite{DKP} to see generalizations of Schur's theorem.
An equivalent statement is that $[G, G]$ is finite for any central extension  $1\to N \to G \to H \to 1$ of a finite group $H$.
Here, we present further generalizations of  Schur's theorem; it is also shown that the finiteness of the Schur multiplier of Noetherian groups is equivalent
to the analogue of Schur's theorem for Noetherian groups.

\begin{proposition} \label{prop}
Let $1\to N \to G \to H \to 1$ be a central extension of groups. Then, there exists an epimorphism $H\otimes H \to [G, G]$ such that
the following diagram commutes:
\[
\xymatrix{
H\otimes H \ar[r]^{1_{H\otimes H}}\ar[d] & H\otimes H \ar[d]^{}\\
[G, G] \ar[r]^{}& [H, H]} ,
\]
being the right vertical homomorphism  defined by $h\otimes h'\mapsto [h, h']$ for all $h, h'\in H$.
\end{proposition}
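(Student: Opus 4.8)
The plan is to build the epimorphism $\psi\colon H\otimes H\to [G,G]$ by hand on generators and then verify that it is a well-defined homomorphism and that the square commutes. The construction mirrors the familiar commutator homomorphism $H\otimes H\to[H,H]$, $h\otimes h'\mapsto[h,h']$ (which is exactly the right vertical arrow), but lifts its target from $[H,H]$ to $[G,G]$.

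First I would exploit the centrality of $N$. Since $N\subseteq Z(G)$, for $g_1,g_2\in G$ the commutator $[g_1,g_2]$ is unchanged if each $g_i$ is multiplied by an element of $N$; hence $[g_1,g_2]$ depends only on the images of $g_1,g_2$ in $H$. Choosing for each $h\in H$ a lift $\tilde h\in G$, I would define $\psi$ on generators by $h\otimes h'\mapsto[\tilde h,\tilde{h'}]$, which by the preceding remark is independent of the chosen lifts.

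Next I would verify that $\psi$ respects the two families of defining relations of the tensor square $H\otimes H$, where the action is conjugation ${}^{h}h'=hh'h^{-1}$. The point is that $\tilde h_1\tilde h_2$ is a lift of $h_1h_2$, while ${}^{\tilde h_1}\tilde h_2$ and ${}^{\tilde h_1}\tilde{h'}$ are lifts of ${}^{h_1}h_2$ and ${}^{h_1}h'$; substituting these, the relation $\psi(h_1h_2\otimes h')=\psi({}^{h_1}h_2\otimes{}^{h_1}h')\,\psi(h_1\otimes h')$ becomes the standard commutator identity $[ab,c]={}^{a}[b,c]\,[a,c]$ in $G$, and the second defining relation becomes $[a,bc]=[a,b]\,{}^{b}[a,c]$. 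Both hold in any group, so $\psi$ is a homomorphism; it is onto because its image contains every $[g,g']$, and these generate $[G,G]$.

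Finally, the square commutes by a direct trace. The bottom arrow $[G,G]\to[H,H]$ is the map induced by the quotient $G\to H$, sending $[g,g']\mapsto[\bar g,\bar{g'}]$. Going down and then right sends $h\otimes h'\mapsto[\tilde h,\tilde{h'}]\mapsto[h,h']$, while the identity followed by the right arrow sends $h\otimes h'\mapsto[h,h']$; the two agree. I expect the only genuinely delicate step to be the well-definedness in the third paragraph: it is precisely the centrality of $N$ that makes $\psi$ independent of the lifts and that lets the commutator identities of $G$ descend to the defining relations of $H\otimes H$. Everything else is routine commutator calculus.
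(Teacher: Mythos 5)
Your proof is correct, but it takes a genuinely different route from the paper. You construct the epimorphism directly on the presentation of $H\otimes H$: choose a lift $\tilde h\in G$ of each $h\in H$, define $h\otimes h'\mapsto[\tilde h,\tilde h']$, observe that centrality of $N$ makes this independent of the lifts, and then check that the two defining relations of the tensor square reduce (via $[{}^a b,{}^a c]={}^a[b,c]$) to the standard commutator identities $[ab,c]={}^a[b,c]\,[a,c]$ and $[a,bc]=[a,b]\,{}^b[a,c]$, which indeed hold with the paper's convention $[g,g']=gg'g^{-1}g'^{-1}$; surjectivity and commutativity of the square follow exactly as you say. The paper instead invokes the right-exactness of the non-abelian tensor product applied to the central extension, yielding the exact sequence $N\otimes G\times G\otimes N\to G\otimes G\to H\otimes H\to 1$ and hence an isomorphism $\theta\colon (G\otimes G)/X\cong H\otimes H$ with $X$ the image of $N\otimes G\times G\otimes N$; the commutator map $\lambda\colon G\otimes G\to G$ kills $X$ by centrality of $N$, so it factors through the quotient, and $\lambda^*\theta^{-1}$ is the desired epimorphism. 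The two arguments use centrality of $N$ at the analogous point (your well-definedness of $[\tilde h,\tilde h']$ versus the paper's $\lambda(X)=1$), but the justifications differ in character: the paper's argument is shorter once the exactness result for the tensor product is granted, and the same template recurs later in the paper (Proposition~\ref{prop3} runs the identical scheme with $\gamma_n$); your argument is self-contained and more elementary, needing only the defining relations of $G\otimes H$ and routine commutator calculus, at the cost of the explicit relation-checking that the exact sequence packages away.
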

\begin{proof}
Starting from the following exact sequence of groups:
\[
N\otimes G \times G\otimes N \to G\otimes G \to H\otimes H \to 1,
\]
we denote the image of $N\otimes G \times G\otimes N$ into $G\otimes G$ by $X$. Then, the groups $\frac{G\otimes G} { X}$ and  $H\otimes H$ are isomorphic; let us denote the isomorphism by $\theta$, and
let $\lam \colon G\otimes G\to G$ be the homomorphism given by
$g\otimes g'\mapsto [g, g']$ for all $g, g'\in G$. Since $N$ is contained in the center of $G$, $\lam (X)=1$. Thus, $\lambda$ induces a well-defined homomorphism
$\lam^* \colon \frac{G\otimes G }{ X }\to G$ with image $[G, G]$.
Therefore, the homomorphism $\lam ^* \theta ^{-1} \colon  H\otimes H \to [G, G]$ satisfies the required conditions.
\end{proof}

Using this proposition, generalizations of Schur's theorem for various classes of groups can be proved. In the next result, the class of super-solvable groups
might be the only one for which the statement is new, according to the authors' knowledge.

\begin{proposition}
Let $1\to N \to G \to H \to 1$ be a central extension of groups. If $H$ belongs to the class of finite,  polycyclic, polycyclic-by-finite,
super-solvable or finite $p$-groups, then $[G, G]$ belongs to the same class.
\end{proposition}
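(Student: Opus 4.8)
The unifying observation is that Proposition~\ref{prop} exhibits $[G,G]$ as a homomorphic image of the tensor square $H\otimes H$, via the epimorphism $H\otimes H \to [G,G]$. Since each of the five classes in the statement is closed under passing to quotients, the result follows as soon as one knows that the relevant quotient $[G,G]$ lands in the corresponding class. The plan is therefore to split the five classes into two groups: those for which $H\otimes H$ itself is already known to lie in the class (so that its quotient $[G,G]$ does too), and the super-solvable case, which needs an extra argument.

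First I would dispatch the four routine classes. If $H$ is finite, then $H\otimes H$ is finite, and if $H$ is a finite $p$-group, then $H\otimes H$ is again a finite $p$-group; both are standard facts about the tensor square. If $H$ is polycyclic, Moravec's theorem \cite{Mo07} gives that $H\otimes H$ is polycyclic; and if $H$ is polycyclic-by-finite, then $H$ is in particular Noetherian, so the Proposition proved just above (applied with both factors equal to $H$, acting on one another by conjugation) shows that $H\otimes H$ is polycyclic-by-finite. In each of these four cases, combining the epimorphism of Proposition~\ref{prop} with closure of the class under homomorphic images immediately places $[G,G]$ in the same class.

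The remaining, and genuinely new, case is the super-solvable one, and this is where I expect the main difficulty: it is not clear that $H\otimes H$ is super-solvable, so rather than trying to inherit super-solvability directly I would show that $[G,G]$ is \emph{finitely generated and nilpotent}. Finite generation is easy, since a super-solvable $H$ is polycyclic, whence $H\otimes H$ is polycyclic by \cite{Mo07}, hence Noetherian, so its quotient $[G,G]$ is finitely generated. For nilpotency I would exploit the central-extension structure: the subgroup $[G,G]\cap N$ lies in $N\subseteq Z(G)$ and so is central in $[G,G]$, while $[G,G]/([G,G]\cap N)$ is isomorphic to the image of $[G,G]$ in $H$, namely $[H,H]$. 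As the derived subgroup of a super-solvable group is nilpotent, $[H,H]$ is nilpotent, and a central extension of a nilpotent group is again nilpotent, so $[G,G]$ is nilpotent. Invoking the classical fact that a finitely generated nilpotent group is super-solvable then finishes this case. The crux of the whole argument is exactly this super-solvable step, whose resolution rests on recognising that the two individually weaker conclusions, ``finitely generated'' (from Moravec's theorem) and ``nilpotent'' (from the central extension), together force super-solvability.
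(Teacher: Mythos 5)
Your proof is correct, but it takes a genuinely different route in one of the five cases, so a comparison is in order. For the classes of finite, finite $p$-, polycyclic and polycyclic-by-finite groups your argument is the same as the paper's: combine the epimorphism $H\otimes H \to [G,G]$ from Proposition~\ref{prop} with the closure of these classes under homomorphic images and the known closure of each under tensor squares. Where you diverge is the super-solvable case. The paper treats it uniformly with the other four, citing \cite{DLT}, \cite{Ell87f} and \cite{Mo07} for the fact that $H\otimes H$ is super-solvable whenever $H$ is; so your premise that this closure ``is not clear'' is in fact settled in the cited literature, and the paper's entire proof is two lines. Your substitute argument is nonetheless valid at every step: $[G,G]\cap N\subseteq N\subseteq Z(G)$ is central in $[G,G]$, and $[G,G]/([G,G]\cap N)$ is the image of $[G,G]$ in $H$, which equals $[H,H]$ by surjectivity of $G\to H$; the derived subgroup of a super-solvable group is indeed nilpotent; if $A\leq Z(K)$ and $K/A$ is nilpotent of class $c$, then $\gamma_{c+2}(K)\leq [K,A]=1$, so $K$ is nilpotent; since super-solvable implies polycyclic, $H\otimes H$ is polycyclic by \cite{Mo07} and its quotient $[G,G]$ is finitely generated; and finitely generated nilpotent groups are super-solvable (see \cite{RoDe}). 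What each route buys: the paper's is uniform and minimal, at the cost of resting on the tensor-square closure result for super-solvability; yours makes that case self-contained modulo classical structure theory, and it actually establishes something strictly stronger there, namely that $[G,G]$ is finitely generated and nilpotent (hence super-solvable, but not conversely --- compare $S_3$), a conclusion the paper's citation-based argument does not yield.
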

\begin{proof}   It is proved in \cite{DLT}, \cite{ Ell87f} and \cite{Mo07} that if $H$ belongs to the class of  finite,  polycyclic, polycyclic-by-finite,
super-solvable or finite $p$-groups, then $H\otimes H$ also belongs to the same class.  By Proposition~\ref{prop}, $[G, G]$ is a homomorphic image of $H\otimes H$,
hence $[G, G]$ is finite, polycyclic, polycyclic-by-finite, super-solvable or finite $p$-group, respectively.
\end{proof}

\begin{corollary}
Let $G$ be a group. If $G/Z(G)$ belongs to the class of finite, polycyclic, polycyclic-by-finite, super-solvable or finite $p$-groups,
then $[G, G]$ belongs to the same class.
\end{corollary}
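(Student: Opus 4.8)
The plan is to deduce this directly from the preceding Proposition by recognizing $G/Z(G)$ as the quotient term of a canonical central extension. First I would write down the short exact sequence
\[
1 \to Z(G) \to G \to G/Z(G) \to 1,
\]
in which the first map is the inclusion of the center and the second is the natural projection. The key (and essentially only) observation is that this is a \emph{central} extension: since $Z(G)$ consists precisely of those elements of $G$ commuting with all of $G$, the subgroup $Z(G)$ lies in the center of $G$, so the extension satisfies the hypothesis demanded by the Proposition.

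With this in place the argument is immediate. I would set $N = Z(G)$ and $H = G/Z(G)$ and apply the previous Proposition. By hypothesis $H = G/Z(G)$ belongs to one of the listed classes (finite, polycyclic, polycyclic-by-finite, super-solvable, or finite $p$-group), and the Proposition then yields that $[G, G]$ belongs to the same class, which is exactly the assertion to be proved.

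The only point meriting any verification is that the extension is genuinely central, and this is forced by the very definition of the center. Consequently I do not anticipate any substantive obstacle: the corollary is simply the specialization of the Proposition to the distinguished central extension attached to $Z(G)$, and the whole proof should reduce to naming that extension and quoting the preceding result.
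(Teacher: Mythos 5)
Your proof is correct and is exactly the argument the paper intends: the corollary is stated without proof precisely because it is the specialization of the preceding proposition to the central extension $1\to Z(G)\to G\to G/Z(G)\to 1$. Nothing further is needed.
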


The next two propositions show how the veracity of Schur's theorem for Noetherian groups is related to the ``finiteness'' of the Schur multiplier of the same group.

\begin{proposition}
Let  $1\to N \to G \to H \to 1$ be a central extension of a Noetherian group $H$. If the Schur multiplier of $H$ is finitely generated, then $[G, G]$ is Noetherian.
\end{proposition}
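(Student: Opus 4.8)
The plan is to recognize this statement as an immediate consequence of the epimorphism produced in Proposition~\ref{prop} together with the tensor-square criterion of Proposition~\ref{paper55}. Since $H$ is Noetherian and its Schur multiplier is finitely generated, Proposition~\ref{paper55} tells us directly that $H\otimes H$ is a Noetherian group. This is the one substantive input, and it has already been established.

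Next, I would invoke Proposition~\ref{prop}. Applied to the given central extension $1\to N\to G\to H\to 1$, it yields an epimorphism $H\otimes H \to [G,G]$. Thus $[G,G]$ is realized as a homomorphic image of the Noetherian group $H\otimes H$.

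The final step is to observe that the class of Noetherian groups is closed under homomorphic images. This is a routine fact: if $q\colon K\to Q$ is a surjective homomorphism with $K$ Noetherian, then every subgroup $S\le Q$ satisfies $S=q(q^{-1}(S))$, where $q^{-1}(S)$ is a subgroup of $K$ and hence finitely generated; therefore $S$ is finitely generated, so $Q$ is Noetherian. Combining the three steps, $[G,G]$ is Noetherian.

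I do not expect any genuine obstacle here, as the real work is entirely contained in the two cited propositions; the argument mirrors exactly the proof of the preceding proposition for the finite, polycyclic, and related classes, with ``Noetherian with finitely generated Schur multiplier'' playing the role that closure of those classes under the tensor square played there.
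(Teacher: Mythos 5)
Your proposal is correct and coincides with the paper's own argument: Proposition~\ref{paper55} gives that $H\otimes H$ is Noetherian, and the epimorphism $H\otimes H\to[G,G]$ from Proposition~\ref{prop} transfers this to $[G,G]$. Your explicit verification that Noetherianity passes to quotients is a routine fact the paper leaves implicit, so the two proofs are essentially identical.
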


\begin{proof} Since the Schur multiplier of $H$ is finitely generated, Proposition~\ref{paper55} implies that $H\otimes H$ is Noetherian. Therefore,
by Proposition~\ref{prop}, $[G, G]$ is also Noetherian.
\end{proof}

\begin{proposition}
Let $H$ be a Noetherian group. Suppose that for any central extension $1\to N \to G \to [H, H] \to 1$ of the derived subgroup $[H, H]$, $[G, G]$ is Noetherian.
Then the Schur multiplier of $H$ is finitely generated.
\end{proposition}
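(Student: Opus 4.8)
The plan is to deduce the statement in two movements: first force the Schur multiplier of the derived subgroup $K:=[H,H]$ to be finitely generated, using the hypothesis on central extensions of $K$; then transfer this to $H$ by means of the closure result already established in Corollary~\ref{corollary1} together with the right-exactness of the non-abelian tensor product.

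First I would prove that $H_2(K)$ is finitely generated. Fix a free presentation $1\to R\to F\to H\to 1$. The commutator subgroup $F'=[F,F]$ is free (being a subgroup of a free group), and $S:=R\cap F'$ is normal in $F'$, so $1\to S\to F'\to K\to 1$ is a free presentation of $K$. By Hopf's formula, $H_2(K)=(S\cap F'')/[F',S]$, where $F''=[F',F']$. Put $G:=F'/[F',S]$; then $1\to S/[F',S]\to G\to K\to 1$ is a central extension of $K=[H,H]$, so the hypothesis guarantees that $[G,G]$ is Noetherian. Since $[G,G]=F''/[F',S]$ contains $H_2(K)=(S\cap F'')/[F',S]$ as a subgroup, $H_2(K)$ is finitely generated. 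The one idea here is to realise $H_2([H,H])$ inside the commutator subgroup of a suitable (stem) central extension of $[H,H]$, so that Noetherianity is inherited.

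Then I would pass back to $H$. As $H$ is Noetherian, its normal subgroup $K$ is Noetherian and has finitely generated Schur multiplier by the previous step; hence Corollary~\ref{corollary1} shows that $H\otimes K$ is Noetherian, and by the symmetry $H\otimes K\cong K\otimes H$ of the non-abelian tensor product, so is $K\otimes H$. Moreover $H/K=H_{\ab}$ is finitely generated abelian, so $H_{\ab}\otimes_{\mathbb{Z}}H_{\ab}$ is finitely generated abelian, hence Noetherian. Using the right-exactness of the tensor product (as in the exact sequences appearing in Lemma~\ref{lemma2} and Lemma~\ref{lemma3}), there is an exact sequence
\[
(K\otimes H)\times(H\otimes K)\to H\otimes H\to H_{\ab}\otimes_{\mathbb{Z}}H_{\ab}\to 1 ,
\]
whose first and last terms are Noetherian; therefore $H\otimes H$ is Noetherian. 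Proposition~\ref{paper55} then gives that the Schur multiplier of $H$ is finitely generated.

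The main obstacle is the first movement. The hypothesis only controls the commutator subgroups of central extensions of $[H,H]$, and such a commutator subgroup can see no more of the multiplier than $H_2([H,H])$; it cannot detect $H_2(H)$ directly. The crux is therefore to build an explicit central extension of $[H,H]$ from a free presentation whose commutator subgroup contains a copy of $H_2([H,H])$. Once that finiteness is in hand, the return trip to $H$ is essentially formal, relying only on Corollary~\ref{corollary1} and the standard right-exactness of $\otimes$.
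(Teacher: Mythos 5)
Your proof is correct, and every step checks out: the extension $1\to S/[F',S]\to F'/[F',S]\to [H,H]\to 1$ is indeed central, the containment $[F',S]\subseteq F''$ makes $[G,G]=F''/[F',S]$, Hopf's formula does embed $H_2([H,H])=(S\cap F'')/[F',S]$ into $[G,G]$, and the hypothesis applies to this extension since it allows arbitrary central kernels. But your route is genuinely different from the paper's, which is far more economical: the paper observes that the commutator map $\lam\colon H\otimes H\to [H,H]$, $h\otimes h'\mapsto [h,h']$, has central kernel (it is the crossed module $\phi$ of the Introduction in the case $G=H$), so $1\to \Ker\lam\to H\otimes H\to [H,H]\to 1$ is \emph{itself} a central extension of $[H,H]$; the hypothesis then gives directly that $[H\otimes H,H\otimes H]$ is Noetherian, Proposition~\ref{paper5} gives that the abelianization of $H\otimes H$ is finitely generated (since $D_H(H)=[H,H]$ is finitely generated, $H$ being Noetherian), and extension-closure of Noetherian groups yields $H\otimes H$ Noetherian, whence Proposition~\ref{paper55} concludes. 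What your route buys: it makes explicit the intermediate fact that $H_2([H,H])$ is finitely generated, which the paper's argument never produces, and it replaces the crossed-module centrality of $\Ker\lam$ by classical ingredients (Nielsen--Schreier, Hopf's formula). What it costs: a second movement the paper does not need at all --- Corollary~\ref{corollary1}, the symmetry $H\otimes K\cong K\otimes H$, closure of Noetherian groups under finite products and quotients, and the right-exact sequence $(K\otimes H)\times (H\otimes K)\to H\otimes H\to H_{\ab}\otimes_{\mathbb{Z}}H_{\ab}\to 1$ --- whereas the paper reaches $H\otimes H$ Noetherian in one stroke because the group to which it applies the hypothesis already \emph{is} $H\otimes H$. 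Both arguments invoke the hypothesis on a single well-chosen extension and both terminate at Proposition~\ref{paper55}; yours is longer but more self-explanatory about where the multiplier of $[H,H]$ enters.
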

\begin{proof}
By Proposition~\ref{paper55}, it suffices to show that $H\otimes H$ is Noetherian. Consider the following central extension of groups:
\[
1\to \Ker \lam \to H\otimes H \overset{\lam }{\longrightarrow} [H, H]\to 1 ,
\]
where $\lam \colon H\otimes H \to [H, H]$ is given by $h\otimes h'\mapsto [h, h']$ for each $h, h'\in H$. By hypothesis, $[H\otimes H, H\otimes H]$ is Noetherian. Moreover,
taking into account Proposition~\ref{paper5}, it is clear that the abelian group $H\otimes H / [H\otimes H, H\otimes H]$ is finitely generated and hence Noetherian. Thus, from the following exact sequence
\[
1\to  [H\otimes H, H\otimes H] \to H\otimes H \to H\otimes H / [H\otimes H, H\otimes H]\to 1 ,
\]
we deduce that $H\otimes H$ is Noetherian.
\end{proof}

\begin{corollary}
Let $H$ be a perfect Noetherian group. Then the Schur multiplier of $H$ is finitely generated if and only if for any central extension $1\to N \to G \to H \to 1$ of $H$, $[G, G]$ is Noetherian.
\end{corollary}

\begin{corollary}
There exists a Noetherian group which does not have finitely generated Schur multiplier.
\end{corollary}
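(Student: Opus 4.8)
The plan is to produce a single finitely generated group that is Noetherian yet has a non-finitely-generated Schur multiplier, and then to let the reformulations established above do the rest. The guiding constraint is homological: a finitely \emph{presented} group has finitely generated second homology, as one sees from a presentation $2$-complex with finite $2$-skeleton, whose second integral homology is a finitely generated abelian group surjecting onto $H_2$ of the group. Thus any candidate must be finitely generated but \emph{not} finitely presented, while still satisfying the maximal condition on subgroups. Ordinary solvable or linear examples cannot help here, so the search is pushed toward the ``monster'' groups produced by the geometric method of defining relations.

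Concretely, I would invoke Ol'shanski\u\i's construction \cite{Olsh} of an infinite, finitely generated simple group $H$ all of whose proper subgroups are cyclic of a fixed prime order (a Tarski monster). Such an $H$ is Noetherian: every proper subgroup is finite and $H$ itself is finitely generated, so the maximal condition on subgroups holds. It is also perfect, being simple and non-abelian, which lets me apply the perfect-group machinery above. To certify that $H_2(H)$ is not finitely generated I would pass to one of the equivalent statements already proved: by Proposition~\ref{paper55} it suffices to show that the tensor square $H\otimes H$ is not Noetherian, and by the preceding corollary (the characterization for perfect Noetherian groups) it suffices to exhibit a central extension $1\to N\to G\to H\to 1$ whose derived subgroup $[G,G]$ is not Noetherian. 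Either route converts the problem into a statement about the relation module of Ol'shanski\u\i's (necessarily infinite) presentation.

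The main obstacle is exactly this final input: showing that Ol'shanski\u\i's group has an infinitely generated multiplier, equivalently a central extension with non-Noetherian commutator subgroup. The results of the present paper reduce the corollary to this one concrete fact but contribute nothing to establishing it; that must come from the fine control over van Kampen diagrams intrinsic to the construction in \cite{Olsh}, guaranteeing that no finite set of defining relators accounts for $(R\cap[F,F])/[F,R]$ in Hopf's formula. Once this is available, the chosen reformulation --- via Proposition~\ref{paper55} or via the perfect-group characterization --- immediately identifies $H$ as a Noetherian group whose Schur multiplier is not finitely generated, which is the assertion to be proved.
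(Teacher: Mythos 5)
You have correctly identified the reduction the paper itself uses: take a Tarski monster $H$, observe that it is Noetherian (finitely generated with all proper subgroups finite) and perfect, and convert the assertion about $H_2(H)$, via Proposition~\ref{paper55} and the corollary for perfect Noetherian groups, into the existence of a central extension $1\to N\to G\to H\to 1$ with $[G,G]$ non-Noetherian. Your preliminary remark that the candidate must be finitely generated but not finitely presented is also sound. But the proof stops exactly where the mathematical content lies, and you say so yourself: you never establish that any Tarski monster actually has a non-finitely-generated Schur multiplier, deferring this to ``fine control over van Kampen diagrams'' and an unexecuted Hopf-formula computation of $(R\cap[F,F])/[F,R]$. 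That is a genuine gap: none of the properties you invoke (infinite, finitely generated, simple, all proper subgroups cyclic of prime order) formally obstructs a finitely generated --- or even trivial --- multiplier, so the key fact must be imported for a specific construction, and your proposal neither proves it nor cites a precise result that does.

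The paper closes this gap with a single citation sharper than the one you reach for: Ol'shanski\u\i \ proved in \cite{Olsh} that there exists a \emph{perfect} group $G$ such that $Z(G)$ is free abelian of countable rank and $G/Z(G)$ is a Tarski monster. This hands over the required central extension ready-made: since $G$ is perfect, $[G,G]=G$ contains the non-finitely-generated abelian subgroup $Z(G)$, so $[G,G]$ is not Noetherian, while $H=G/Z(G)$ is Noetherian; the contrapositive of the preceding propositions (equivalently, the perfect-group corollary applied to $1\to Z(G)\to G\to H\to 1$) then forces $H_2(H)$ to be infinitely generated. In other words, no fresh diagrammatic analysis is needed, because the existence of the extension is itself the quoted theorem. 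To repair your write-up, replace the programmatic final paragraph with this citation (or another verified source for the non-finitely-generated multiplier of a specific Tarski monster); as it stands, your text is a correct reduction plus an acknowledged unproved claim, not a proof.
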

\begin{proof}
Ol'shanski\u\i \ showed that there exists a perfect group $G$ such that $Z(G)$ is a free abelian group of countable
rank, and $G/Z(G)$ is a Tarski monster (\!\cite{Olsh}). Thus, $[G, G]$ is not Noetherian.
\end{proof}

The next proposition provides a generalization of  Schur's theorem for finitely generated groups.

\begin{proposition} \label{prop2}
Let $H$ be a finitely generated group. Then, for any central extension $1\to N \to G \to H \to 1$ of groups, $[G, G]$
is finitely generated if and only if $[H, H]$ is finitely generated.
\end{proposition}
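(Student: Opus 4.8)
The plan is to prove the two implications separately, with the forward direction being essentially immediate and the reverse direction resting on the earlier structural results. Throughout, I would write $\pi\colon G\to H$ for the quotient map of the given central extension.

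For the implication that finite generation of $[G,G]$ forces finite generation of $[H,H]$, I would simply observe that $\pi$ carries commutators to commutators and is surjective, so that $\pi([G,G])=[H,H]$. Hence $[H,H]$ is a homomorphic image of $[G,G]$ and is therefore finitely generated whenever $[G,G]$ is.

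The substance lies in the converse. First I would reduce the statement to a finite-generation property of the tensor square $H\otimes H$, where $H$ acts on itself by conjugation (these actions are automatically compatible). The key computation is the identification of the derivative $D_H(H)$: since the action is conjugation, $h\,{}^{h'}h^{-1}=[h,h']$ for all $h,h'\in H$, whence
\[
D_H(H)=\langle h\,{}^{h'}h^{-1}\mid h,h'\in H\rangle=[H,H].
\]
Now, assuming $[H,H]$ is finitely generated and recalling that $H$ is finitely generated with $D_H(H)=[H,H]$, Proposition~\ref{paper5} (applied with both factors equal to $H$) yields that $H\otimes H$ is finitely generated. Finally, Proposition~\ref{prop} furnishes an epimorphism $H\otimes H\to[G,G]$, so $[G,G]$ is a homomorphic image of a finitely generated group and is therefore finitely generated, completing the argument.

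I expect the main subtlety to be conceptual rather than computational: the whole argument hinges on recognising that, for the conjugation action, the derivative $D_H(H)$ coincides with $[H,H]$, so that the criterion of Proposition~\ref{paper5} translates directly into the hypothesis on $[H,H]$. Once this identification is in place, the two previously established propositions combine to give the result with no further work.
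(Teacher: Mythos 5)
Your proposal is correct and follows the paper's proof essentially verbatim: the forward direction via the surjection $\pi([G,G])=[H,H]$, and the converse by combining Proposition~\ref{paper5} (with the identification $D_H(H)=[H,H]$ for the conjugation action, which the paper leaves implicit) with the epimorphism $H\otimes H\to[G,G]$ from Proposition~\ref{prop}. Your explicit verification of the derivative computation is a welcome clarification but not a departure from the paper's route.
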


\begin{proof}
Firstly, it is clear that $[H, H]$ has to be finitely generated in order that $[G, G]$ is also finitely generated.
 Thus, we focus on proving that $[G, G]$ is finitely generated if so are both $H$ and $[H, H]$.
By  Proposition~\ref{paper5}, $H\otimes H$ is a finitely generated group; therefore, Proposition~\ref{prop} concludes the argument.
\end{proof}

\begin{corollary}
Let $G$ be a group such that $G/Z(G)$ is finitely generated. Then, $[G, G]$ is finitely generated if and only if the commutator subgroup of $G/Z(G)$ is too.
\end{corollary}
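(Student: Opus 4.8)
The plan is to recognize this corollary as an immediate specialization of Proposition~\ref{prop2} to the canonical central extension furnished by the center of $G$. The whole point of the preceding proposition is to compare the finite generation of $[G,G]$ with that of $[H,H]$ across a central extension with finitely generated quotient, and the center provides exactly such an extension.

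First I would form the central extension
\[
1 \to Z(G) \to G \to G/Z(G) \to 1,
\]
which is indeed central because $Z(G)$ is by definition contained in the center of $G$. Setting $H = G/Z(G)$, the hypothesis that $G/Z(G)$ is finitely generated says precisely that $H$ is a finitely generated group, so the hypotheses of Proposition~\ref{prop2} are satisfied with $N = Z(G)$.

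Applying Proposition~\ref{prop2} to this extension, I obtain that $[G,G]$ is finitely generated if and only if $[H,H] = [G/Z(G), G/Z(G)]$ is finitely generated. Since $[G/Z(G), G/Z(G)]$ is exactly the commutator subgroup of $G/Z(G)$, this is the asserted equivalence.

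The main obstacle is, frankly, almost nonexistent: the statement is a direct corollary, and the only point requiring any verification is that the extension by the center is genuinely central, which is immediate from the definition of $Z(G)$. I therefore expect the proof to amount to a single invocation of the preceding proposition, with the bulk of the real work having already been done there (and, in turn, in Proposition~\ref{prop} via the non-abelian tensor square).
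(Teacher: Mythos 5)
Your proposal is correct and matches the paper's intent exactly: the corollary is stated without proof precisely because it is the immediate specialization of Proposition~\ref{prop2} to the central extension $1 \to Z(G) \to G \to G/Z(G) \to 1$, which is what you do. No gaps; the only hypothesis to check, that $H = G/Z(G)$ is finitely generated, is given directly.
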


\begin{remark}
If $H$ is a group such that $[H, H]$ is finitely generated but $H$ itself is not, it holds that for any central extension
$1\to N \to G \to H \to 1$ of groups, $[G, G]$ need not be finitely generated. For instance, suppose that $G=F/[F, [F, F]]$ and $H=F/[F, F]$, where
$F$ is a free group with a basis of infinite rank. Then, $[G, G]$ is not finitely generated. This trivial example highlights that in Proposition~\ref{prop2}
we essentially need both $H$ and $[H, H]$ to be finitely generated.
\end{remark}

Given a group $G$, let $\Gamma_1(G), \Gamma_2(G), \Gamma_3(G) \dots,$ denote the \emph{derived series} of $G$, i.e.
$\Gamma_1(G)=G$ and $\Gamma_{i+1}(G)=[\Gamma_i(G), \Gamma_i(G)]$, for $i\geq 1$. Proposition~\ref{prop2} yields the following results.

\begin{corollary}
Let $H$ be a finitely generated group and $1\to N \to G \to H \to 1$ be a central extension of $H$. Then the following two statements are equivalent:
\begin{itemize}
  \item[(i)] $\Gamma_2(H), \Gamma_3(H), \dots$ is a sequence of finitely generated groups;
  \item[(ii)] $\Gamma_2(G), \Gamma_3(G), \dots$ is a sequence of finitely generated groups.
\end{itemize}
\end{corollary}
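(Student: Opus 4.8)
The plan is to propagate the given central extension down the derived series and feed each stage into Proposition~\ref{prop2}. Write $\pi\colon G\to H$ for the quotient map, so that $\Ker\pi=N$ is central in $G$. First I would establish, by induction on $i$, that $\pi$ restricts to a surjection $\Gamma_i(G)\to\Gamma_i(H)$ with kernel $\Gamma_i(G)\cap N$. The base case $i=1$ is the extension itself; for the inductive step, if $\Gamma_i(G)\to\Gamma_i(H)$ is onto then, as commutators map to commutators, $\Gamma_{i+1}(G)=[\Gamma_i(G),\Gamma_i(G)]$ surjects onto $[\Gamma_i(H),\Gamma_i(H)]=\Gamma_{i+1}(H)$, with kernel $\Gamma_{i+1}(G)\cap N$. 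Since $N$ is central in $G$, the subgroup $\Gamma_i(G)\cap N$ is central in $\Gamma_i(G)$, so for every $i\geq 1$ we obtain a central extension
\[
1\to \Gamma_i(G)\cap N\to \Gamma_i(G)\to \Gamma_i(H)\to 1.
\]

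The implication (ii)$\Rightarrow$(i) is then immediate: by the above, each $\Gamma_i(H)$ is a homomorphic image of $\Gamma_i(G)$, so if the groups $\Gamma_2(G),\Gamma_3(G),\dots$ are all finitely generated, so are $\Gamma_2(H),\Gamma_3(H),\dots$.

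For (i)$\Rightarrow$(ii) I would argue by induction on $i$, using the displayed central extension as the input to Proposition~\ref{prop2}. The point is that at the $i$-th stage the quotient $\Gamma_i(H)$ is already known to be finitely generated: for $i=1$ because $H$ is finitely generated by hypothesis, and for $i\geq 2$ by assumption (i). Hence Proposition~\ref{prop2}, applied to the central extension of $\Gamma_i(H)$ above, yields that $\Gamma_{i+1}(G)=[\Gamma_i(G),\Gamma_i(G)]$ is finitely generated if and only if $\Gamma_{i+1}(H)=[\Gamma_i(H),\Gamma_i(H)]$ is. As $\Gamma_{i+1}(H)$ is finitely generated by (i), we conclude that $\Gamma_{i+1}(G)$ is finitely generated, which completes the induction and gives (ii).

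The step I would check most carefully, and the real hinge of the argument, is the centrality of the restricted extension, namely that $\Gamma_i(G)\cap N$ lies in the center of $\Gamma_i(G)$. This is what allows Proposition~\ref{prop2} to be reapplied at every level rather than only at the top, and although it follows at once from the centrality of $N$ in $G$, the whole induction would collapse without it. I do not expect any genuinely new difficulty beyond this bookkeeping.
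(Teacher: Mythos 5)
Your proof is correct and is precisely the argument the paper intends: the corollary is stated as an immediate consequence of Proposition~\ref{prop2}, obtained by restricting the central extension to each term of the derived series (where $\Gamma_i(G)\cap N$ is automatically central in $\Gamma_i(G)$) and applying the proposition inductively, with the converse direction following from surjectivity. Your explicit verification of the restricted central extensions and of the finite generation of each quotient $\Gamma_i(H)$ before invoking Proposition~\ref{prop2} fills in exactly the bookkeeping the paper leaves implicit.
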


\begin{corollary}
Let $H$ be a Noetherian group and $1\to N \to G \to H \to 1$ be a central extension of $H$. Then $\Gamma_n(G)$ is finitely generated for $n\geq 2$.
\end{corollary}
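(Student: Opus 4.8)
The plan is to read this off from the preceding corollary once the Noetherian hypothesis is unpacked. Recall that a group is Noetherian precisely when every one of its subgroups is finitely generated; in particular $H$ itself is finitely generated, so the standing hypothesis of the preceding corollary (that the base of the central extension be finitely generated) is already met. Thus the whole task reduces to checking that hypothesis (i) of that corollary holds automatically for a Noetherian $H$.

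That verification is immediate: each term $\Gamma_n(H)$ of the derived series is, by its very construction, a subgroup of $H$, since $\Gamma_{n+1}(H)=[\Gamma_n(H),\Gamma_n(H)]\leq \Gamma_n(H)\leq\cdots\leq H$. Because $H$ is Noetherian, every such subgroup is finitely generated, so $\Gamma_2(H),\Gamma_3(H),\dots$ is a sequence of finitely generated groups. This is exactly condition (i), and the equivalence furnished by the preceding corollary then delivers condition (ii), namely that $\Gamma_2(G),\Gamma_3(G),\dots$ is a sequence of finitely generated groups, which is the assertion.

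If one prefers to bypass the appeal to the preceding corollary and argue straight from Proposition~\ref{prop2}, the same conclusion can be obtained level by level. For each $n\geq 2$ the surjection $G\to H$ restricts to a surjection $\Gamma_{n-1}(G)\to\Gamma_{n-1}(H)$ whose kernel is $\Gamma_{n-1}(G)\cap N$; since $N$ is central in $G$ this kernel is central in $\Gamma_{n-1}(G)$, so $1\to \Gamma_{n-1}(G)\cap N\to \Gamma_{n-1}(G)\to \Gamma_{n-1}(H)\to 1$ is again a central extension. As $\Gamma_{n-1}(H)$ and $\Gamma_n(H)=[\Gamma_{n-1}(H),\Gamma_{n-1}(H)]$ are both finitely generated subgroups of the Noetherian group $H$, Proposition~\ref{prop2} applied to this extension shows that $\Gamma_n(G)=[\Gamma_{n-1}(G),\Gamma_{n-1}(G)]$ is finitely generated. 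I do not expect any genuine obstacle: the entire content resides in the preceding corollary (or in Proposition~\ref{prop2}), and the Noetherian condition merely feeds its hypotheses. The one point meriting a word of care in the direct route is the observation that each kernel $\Gamma_{n-1}(G)\cap N$ is central, so that the restricted extensions remain central and Proposition~\ref{prop2} stays applicable at every level.
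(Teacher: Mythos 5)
Your proposal is correct and matches the paper's intended argument: the corollary is stated as an immediate consequence of Proposition~\ref{prop2} (via the preceding equivalence corollary), with the Noetherian hypothesis serving exactly as you use it, to guarantee that $H$ and every term $\Gamma_n(H)$ of its derived series are finitely generated. Your alternative level-by-level route, including the check that each kernel $\Gamma_{n-1}(G)\cap N$ remains central, is just the same reduction made explicit, so no further comparison is needed.
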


Aiming to obtain the same results for the lower central series, we present the following proposition. Recall that the \emph{lower central series} $\gamma_1(G), \gamma_2(G), \gamma_3(G) \dots,$
of a group $G$ is defined by $\gamma_1(G)=G$ and $\gamma_{i+1}(G)=[G, \gamma_i(G)]$, for $i\geq 1$.

\begin{proposition} \label{prop3}
Let $H$ be a finitely generated group and $1\to N \to G \to H \to 1$ be a central extension of $H$. Fix an integer $n\geq 1$ and suppose that $\gamma_n(G)$ is a finitely
generated group. Then, $\gamma_{n+1}(G)$ is finitely generated if and only if $\gamma_{n+1}(H)$ is finitely generated.
\end{proposition}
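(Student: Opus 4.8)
The plan is to treat the two implications separately. The \emph{only if} direction is immediate: writing $\pi\colon G\to H$ for the quotient map, surjectivity gives $\pi(\gamma_i(G))=\gamma_i(H)$ for every $i$, so $\gamma_{n+1}(H)$ is a homomorphic image of $\gamma_{n+1}(G)$ and is finitely generated whenever $\gamma_{n+1}(G)$ is. Thus the whole content lies in the converse, and for it I would aim to exhibit $\gamma_{n+1}(G)$ as a homomorphic image of a finitely generated group, exactly as in the proof of Proposition~\ref{prop2}, but with the tensor square replaced by the tensor product $H\otimes\gamma_n(H)$.

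First I would record that $\gamma_n(H)=\pi(\gamma_n(G))$ is finitely generated, being the image of the finitely generated group $\gamma_n(G)$. Since $\gamma_n(H)$ is normal in $H$, the two groups act on each other by conjugation, these actions are compatible, and both groups are finitely generated. Hence Proposition~\ref{paper5} applies to the pair $(H,\gamma_n(H))$, so $H\otimes\gamma_n(H)$ is finitely generated if and only if the derivatives $D_{\gamma_n(H)}(H)$ and $D_H(\gamma_n(H))$ are. A direct computation with the conjugation actions gives
\[
D_{\gamma_n(H)}(H)=D_H(\gamma_n(H))=[H,\gamma_n(H)]=\gamma_{n+1}(H),
\]
which is finitely generated by hypothesis; therefore $H\otimes\gamma_n(H)$ is finitely generated.

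It then remains to produce an epimorphism $H\otimes\gamma_n(H)\to\gamma_{n+1}(G)$, and here I would imitate the argument of Proposition~\ref{prop}. Writing $K=\gamma_n(G)$ and $A=K\cap N$, the subgroup $A$ is central in $G$, while $K/A\cong\gamma_n(H)$ and $G/N\cong H$. The right-exactness of the non-abelian tensor product yields an exact sequence
\[
(N\otimes K)\times(G\otimes A)\longrightarrow G\otimes K\longrightarrow H\otimes\gamma_n(H)\longrightarrow 1,
\]
and the commutator map $\lambda\colon G\otimes K\to G$, $g\otimes x\mapsto[g,x]$, has image $[G,K]=\gamma_{n+1}(G)$. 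Because $N$ and $A$ lie in the centre of $G$, the generators of the image of $(N\otimes K)\times(G\otimes A)$ are killed by $\lambda$, so $\lambda$ factors through the quotient $H\otimes\gamma_n(H)$ and induces the desired epimorphism onto $\gamma_{n+1}(G)$. Combining this with the previous paragraph, $\gamma_{n+1}(G)$ is a homomorphic image of a finitely generated group and hence finitely generated.

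The step I expect to require the most care is the construction of this last epimorphism: one must check that the displayed sequence is genuinely exact in the sense used in Lemmas~\ref{lemma1}--\ref{lemma3} and Proposition~\ref{prop}, that is, that the action of $G$ on $K$ and of $K$ on $G$ descend correctly to $H$ and $\gamma_n(H)$ modulo the central subgroups $N$ and $A$, and that the image of $(N\otimes K)\times(G\otimes A)$ is precisely the kernel needed to identify the cokernel with $H\otimes\gamma_n(H)$. Once the centrality of $N$ is invoked to guarantee that $\lambda$ vanishes on that image, the remainder is formal.
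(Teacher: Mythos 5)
Your proposal is correct and follows essentially the same route as the paper's own proof: the identical right-exact sequence $(N\otimes\gamma_n(G))\times\bigl((N\cap\gamma_n(G))\otimes G\bigr)\to G\otimes\gamma_n(G)\to H\otimes\gamma_n(H)\to 1$, the same commutator map killed on the central part so that it induces an epimorphism $H\otimes\gamma_n(H)\to\gamma_{n+1}(G)$, and the same application of Proposition~\ref{paper5} via $D_H(\gamma_n(H))=D_{\gamma_n(H)}(H)=\gamma_{n+1}(H)$. The only difference is that you also spell out the trivial ``only if'' direction, which the paper leaves implicit.
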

\begin{proof} The idea of this demonstration is to show that if  $\gamma_{n+1}(H)$ is finitely generated, then $\gamma_{n+1}(G)$ is finitely generated too. We have the following extensions
of groups:
\[
1\to N \to G \to H \to 1 ,
\]
and
\[
1\to N_n \to \gamma_n(G) \to \gamma_n(H) \to 1,
\]
where $N_n = N\cap \gamma_n(G)$. Therefore, considering the non-abelian tensor products we get the following exact sequence:
\[
N\otimes \gamma_n(G) \times N_n\otimes G \to G\otimes \gamma_n(G) \to H\otimes \gamma_n(H) \to 1 .
\]
This sequence leads to the isomorphism:
\[
\frac{ G\otimes \gamma_n(G)}{X} \cong  H\otimes \gamma_n(H),
\]
where $X$ is the image of $N\otimes \gamma_n(G) \times N_n\otimes G$ into $G\otimes \gamma_n(G)$. Now, let
$\de \colon G\otimes \gamma_n(G)\to \gamma_{n+1}(G)$ be the homomorphism given by $g\otimes g' \mapsto [g, g']$ for
each $g\in G$ and $g' \in \gamma_{n}(G)$. Since $N$ and $N_n$ are contained in the center of $G$, it holds that $\de (X)=1$. Therefore, $\de$ induces an epimorphism
from $ H\otimes \gamma_n(H)$ to $\gamma_{n+1}(G)$, and it suffices to show that $H\otimes \gamma_n (H)$ is finitely generated. Note that
$D_{H}(\gamma_n(H))=\gamma_{n+1}(H)$ and $D_{\gamma_n (H)}(H)=\gamma_{n+1}(H)$. By Proposition~\ref{paper5}, $H\otimes \gamma_n (H)$ is
finitely generated. Hence, $\gamma_{n+1}(G)$ is finitely generated too.
\end{proof}

Taking into account the previous proposition, it is straightforward to derive the following results.

\begin{corollary}
Let $H$ be a finitely generated group and $1\to N \to G \to H \to 1$ be a central extension of $H$. Then the following two statements are equivalent:
\begin{itemize}
  \item[(i)] $\gamma_2(H), \gamma_3(H), \dots$ is a sequence of finitely generated groups;
  \item[(ii)] $\gamma_2(G), \gamma_3(G), \dots$ is a sequence of finitely generated groups.
\end{itemize}
\end{corollary}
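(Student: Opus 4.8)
The plan is to prove the two implications separately. The direction (ii)$\Rightarrow$(i) I would dispose of immediately: the quotient map $\pi\colon G\to H$ is surjective, and a surjective homomorphism carries the lower central series onto the lower central series, so $\pi(\gamma_n(G))=\gamma_n(H)$ for every $n\geq 1$. Thus each $\gamma_n(H)$ is a homomorphic image of $\gamma_n(G)$, and a homomorphic image of a finitely generated group is finitely generated; hence finite generation of the sequence $\gamma_2(G),\gamma_3(G),\dots$ at once yields that of $\gamma_2(H),\gamma_3(H),\dots$.

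The substantive direction is (i)$\Rightarrow$(ii), which I would prove by induction on $n\geq 2$, showing that every $\gamma_n(G)$ is finitely generated. For the base case $n=2$ I note that $\gamma_2(G)=[G,G]$ and $\gamma_2(H)=[H,H]$; since $H$ is finitely generated and $[H,H]$ is finitely generated by hypothesis (i), Proposition~\ref{prop2} gives that $[G,G]$ is finitely generated. For the inductive step, suppose $\gamma_n(G)$ is finitely generated for some $n\geq 2$. Then the two hypotheses of Proposition~\ref{prop3} for this $n$, namely that $H$ is finitely generated and that $\gamma_n(G)$ is finitely generated, are both satisfied, so that proposition tells us $\gamma_{n+1}(G)$ is finitely generated if and only if $\gamma_{n+1}(H)$ is. As (i) supplies the finite generation of $\gamma_{n+1}(H)$, we conclude that $\gamma_{n+1}(G)$ is finitely generated, completing the induction.

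The point that requires the most care, and the reason the argument is not merely a single invocation of Proposition~\ref{prop3}, is the base case. One cannot start the induction at $n=1$, since applying Proposition~\ref{prop3} with $n=1$ would demand that $\gamma_1(G)=G$ itself be finitely generated, and this can fail: in a central extension the kernel $N$ may be infinitely generated even when $H$ is finitely generated. The role of Proposition~\ref{prop2} is precisely to bypass this gap, as its only hypothesis on the extension is the finite generation of the base group $H$ rather than of $G$; once $\gamma_2(G)=[G,G]$ is known to be finitely generated, Proposition~\ref{prop3} propagates finite generation up the whole series without ever requiring $G$ to be finitely generated.
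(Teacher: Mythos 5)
Your proposal is correct and takes essentially the route the paper intends: the paper derives this corollary from Proposition~\ref{prop3} by the obvious induction on $n$, with the direction (ii)$\Rightarrow$(i) coming, as you say, from the surjections $\gamma_n(G)\to\gamma_n(H)$. Your care over the base case is well placed --- since $G$ itself need not be finitely generated, Proposition~\ref{prop3} cannot be invoked at $n=1$, and anchoring the induction at $\gamma_2(G)=[G,G]$ via Proposition~\ref{prop2} supplies exactly the detail hidden in the paper's ``straightforward'' derivation.
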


\begin{corollary}
Let $H$ be a Noetherian group and $1\to N \to G \to H \to 1$ be a central extension of $H$. Then $\gamma_n(G)$ is finitely generated for $n\geq 2$.
\end{corollary}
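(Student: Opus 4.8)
The plan is to obtain this as an immediate consequence of the preceding corollary, so that the only real task is to verify that the Noetherian hypothesis on $H$ supplies exactly the input that corollary requires. First I would unpack the word \emph{Noetherian}: a group is Noetherian if and only if it satisfies the maximal condition on subgroups, equivalently, if and only if every one of its subgroups is finitely generated. Applying this to $H$ yields two facts at once. On the one hand, $H$ itself is finitely generated, which is precisely the standing hypothesis of the preceding corollary. On the other hand, for every $n\geq 2$ the term $\gamma_n(H)$ of the lower central series, being a subgroup of $H$, is finitely generated as well.

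With these facts in hand the argument reduces to a single invocation. Since $\gamma_2(H), \gamma_3(H), \dots$ is a sequence of finitely generated groups, statement (i) of the preceding corollary holds; as $H$ is finitely generated, that corollary applies and delivers statement (ii), namely that $\gamma_2(G), \gamma_3(G), \dots$ is a sequence of finitely generated groups. This is exactly the assertion that $\gamma_n(G)$ is finitely generated for every $n\geq 2$, as claimed.

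The only point that calls for a moment's care — and where a careless induction would stumble — is that one must neither assert nor need anything about $\gamma_1(G)=G$. The central subgroup $N$ is arbitrary, so even though each $\gamma_n(G)$ with $n\geq 2$ comes out finitely generated, the group $G$ is never forced to be Noetherian; in the perfect case $\gamma_n(G)=[G,G]=G$, as in the Ol'shanski\u\i\ example recalled above, the present corollary is just the statement that $G$ is finitely generated while nonetheless harbouring a non-finitely-generated centre. What makes the underlying induction legitimate is that it is anchored at $n=2$ through Proposition~\ref{prop2}, which compares $[G,G]$ directly with $[H,H]$, and only thereafter propagated by Proposition~\ref{prop3} for $n\geq 2$; the finite generation of $G$ itself is never used as an input. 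Consequently I expect no genuine obstacle in this corollary, merely the discipline of starting the induction at the second term rather than the first.
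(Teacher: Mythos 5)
Your proof is correct and takes essentially the same route the paper intends, since the paper derives this corollary as a straightforward consequence of the preceding equivalence (itself built from Propositions~\ref{prop2} and~\ref{prop3}): Noetherianity of $H$ gives both that $H$ is finitely generated and that every $\gamma_n(H)$, as a subgroup of $H$, is finitely generated, so statement (i) of that corollary holds and (ii) follows. Your closing remark is also well taken: because $N$ (hence $G$) need not be finitely generated, Proposition~\ref{prop3} cannot be invoked at $n=1$, and the induction behind the preceding corollary must indeed be anchored at $n=2$ via Proposition~\ref{prop2} --- a subtlety the paper leaves implicit in its ``straightforward to derive''.
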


\

\section*{Acknowledgments}
The authors were supported by  Agencia Estatal de Investigaci\'on (Spain), grant MTM2016-79661-P (European FEDER support included, UE).
 The first author was also partially supported by  visiting
program (USC).
P. P\'aez-Guill\'an was also supported by FPU scholarship, Ministerio de Educaci\'on, Cultura
y Deporte (Spain).


\begin{thebibliography}{10}

\bibitem{ADST}
A. ~E. Antony, G. Donadze, V. ~P. Sivaprasad, V. ~Z. Thomas, The second stable homotopy group of the
Eilenberg Maclane space, Math. Z.
DOI 10.1007/s00209-017-1870-7.

\bibitem{BJR87}
R.~Brown, D.~L. Johnson, E.~F. Robertson, Some computations of non-abelian
  tensor products of groups, J. Algebra 111~(1) (1987) 177--202.

\bibitem{BroLod84}
R.~Brown, J.-L. Loday, Excision homotopique en basse dimension, C. R. Acad.
  Sci. Paris S\'er. I Math. 298~(15) (1984) 353--356.

\bibitem{BroLod87}
R.~Brown, J.-L. Loday, Van {K}ampen theorems for diagrams of spaces, Topology
  26~(3) (1987) 311--335, with an appendix by M. Zisman.

\bibitem{DKP}
M. R. Dixon,  L. A. Kurdachenko, A. A. Pypka, The theorems of Schur and Baer: a survey,
Int. J. Group Theory  4~(1)  (2015) 21--32.

\bibitem{DLT}
G. Donadze, M. Ladra, V. Thomas, On the closure properties of the non-abelian tensor product, J. Algebra 472 (2017) 399--413

\bibitem{Ell87f}
G.~J. Ellis, The non-abelian tensor product of finite groups is finite, J.
  Algebra 111~(1) (1987) 203--205.

\bibitem{Gu88}
D.~Guin, Cohomologie et homologie non ab\'eliennes des groupes, J. Pure Appl.
  Algebra 50~(2) (1988) 109--137.

\bibitem{Mo07}
P.~Moravec, The non-abelian tensor product of polycyclic groups is polycyclic,
  J. Group Theory 10~(6) (2007) 795--798.

\bibitem{Olsh}
A. Yu. Ol'shanski\u\i, Geometry of defining relations in groups, Mathematics and its Applications 70, Kluwer Academic Publishers, Dordrecht, 1991.

\bibitem{RoDe}
D.~Robinson, A course in the theory of groups, 2nd ed., GTM 80, Springer,  New York, 1996.

\bibitem{RoJo}
J.~J.~Rotman, An introduction to the theory of groups, 4th ed., GTM 148, Springer, New York, 1995.

\bibitem{Vi}
M.~P. Visscher, On the nilpotency class and solvability length of nonabelian tensor products of groups, Arch. Math. (Basel) 73~(3) (1999) 161--171.


\end{thebibliography}
\end{document}